\documentclass[reqno]{amsart}
\usepackage{amsfonts}
\usepackage{latexsym}
\usepackage{graphicx}
\usepackage{amsmath}
\usepackage{hyperref}
\usepackage{amssymb}
\numberwithin{equation}{section}

\usepackage{xcolor,color}

\usepackage{enumerate, hyperref}

\numberwithin{equation}{section}

\newtheorem{thm}{Theorem}[section]
\newtheorem{lem}[thm]{Lemma}

\theoremstyle{definition}
\newtheorem{definition}{Definition}[section]
\newtheorem{rem}[thm]{Remark}

\newcommand\R{{\mathbb R}}
\newcommand\C{{\mathbb C}}

\begin{document}

\title{On the unconditional uniqueness for NLS in $H^s$}

\author[Zheng Han]{Zheng Han $^1$}

\author[Daoyuan Fang]{Daoyuan Fang $^1$}

\address{$^{1}$
Department of Mathematics, Zhejiang University, Hangzhou,
310027, China}

\email[Zheng Han]{\href{mailto:hanzheng5400@yahoo.com.cn}{hanzheng5400@yahoo.com.cn}}
\email[Daoyuan Fang]{\href{mailto:dyf@zju.edu.cn}{dyf@zju.edu.cn}}

\subjclass[2010] {Primary: 35Q55, Secondary:  42E35.}

\keywords{Schr\"o\-din\-ger equation, unconditional uniqueness,
negative order Sobolev/Besov spaces, nonhomogeneous Strichartz
estimates}

\begin{abstract}
In this article,  we study the unconditional uniqueness of $\dot
H^s$, $0<s< 1$, solutions for the nonlinear Schr\"o\-din\-ger
equation $i\partial _t u +\Delta u+ c |u|^\alpha u=0$  in ${\mathbb
R}^n$. We give a unified proof of the previously known results in
the subcritical cases and critical cases, and we also extend these
results to some previously unsettled cases. Our proof uses in
particular negative order Sobolev spaces (or Besov spaces), general
Strichartz estimates, and the improved regularity property for the
difference of two solutions.
\end{abstract}

\maketitle
\section{Introduction}
We study the uniqueness of $\dot{H}^s$ solutions of the following
Schr\"odinger equation:
\begin{equation}\label{NLS} \tag{NLS}
\left\{\begin{array}{ll}
       i\partial_t u+\Delta u+c|u|^\alpha u=0, \\
       u(0)=\varphi\in\dot{H}^s(\R^n), \  t\in[0,T], \ x\in\R^n, \
       n\geq2,
       \end{array}
       \right.
\end{equation}
where $\dot{H}^s$ is the homogeneous Sobolev space, $c\in\C$, $T>0$,
$\alpha>0$ and $s\in (0,\frac n2)$.

 To ensure that the initial value problem is locally well-posed in $H^s(\R^n)$, from Sobolev embedding,  one has to  assume $\alpha\le\frac 4{n-2s}$.   Furthermore, the equation \eqref{NLS} may not make sense, even in the sense of distribution, without an auxiliary space if $\alpha > \frac{n+2s}{n-2s}$. Therefore, one usually constructs the solution within the framework of $C([0,T]; H^s)\cap X$, where $X$ is an auxiliary space. For instance,
 Ginibre and Velo
(\cite{GINVEL}), Kato (\cite{Katou}), Cazenave and Weissler
(\cite{CWHs}) proved that \eqref{NLS} is locally well-posed in
\begin{equation}\label{auxiliary space}
C([0,T_{\text{max}});H^s)\cap
L^q_{\text{loc}}(0,T_{\text{max}};B_{r,2}^s),
\end{equation}
 where
$q=\frac{4(\alpha+2)}{\alpha(n-2s)},\
r=\frac{n(\alpha+2))}{n+s\alpha}$ and $B_{r,2}^s$ is the usual Besov
space.

 The uniqueness of solutions that belongs to an auxiliary space such as $L^q_tB^s_{r,2}$  as
well as $C([0,T];H^s)$ is called  conditional uniqueness. On the
other hand, the uniqueness without any auxiliary space is called
unconditional uniqueness. This problem, in the subcritical case, was
first studied by Kato  \cite{Katou}, in which the following results
are obtained:

The uniqueness holds in $C([0,T];H^s)$ if any of the following three
conditions is satisfied:
\begin{enumerate}
  \item $n=1,\ 0\leq s<\frac{1}{2},\ 0<\alpha<\frac{1+2s}{1-2s};$
  \item $n\geq2,\ 0\leq s<\frac{n}{2},\
  0<\alpha<\min\{\frac{4}{n-2s},
  \frac{2+2s}{n-2s}\};$
  \item $n\geq1,\ s\geq\frac{n}{2}.$
\end{enumerate}
From Kato's work, we can see that when $1\leq s$ and
$0<\alpha<\min\{\frac{4}{n-2s},\frac{n+2s}{n-2s}\}$, the
unconditional uniqueness holds.

Furioli and Terraneo \cite{FurioliT} extended Kato's results by
using negative order Besov spaces. They proved  uniqueness in the
slightly larger space $C([0,T];\dot{H}^s)$ when
\begin{equation}
n\geq3,\ \max\{1,\frac{2s}{n-2s}\}<\alpha<\min\{\frac{2+4s}{n-2s},\
\frac{4}{n-2s},\ \frac{n+2s}{n-2s},\ \frac{n+2-2s}{n-2s}\}.
\end{equation}

In \cite{Rogers}, Rogers applied a generalized  Strichartz estimate
(see \cite{Vilela}) to show that if
\begin{equation}
n\geq3,\
\frac{2+2s}{n-2s}\leq\alpha<\min\{\frac{2+4s(1-\frac{1}{n})}{n-2s},\
\frac{4}{n-2s}\},
\end{equation}
then  uniqueness is established in $C([0,T];\dot{H}^s)$.

Recently, Win and Tsutsumi  \cite{WinT} improved unconditional uniqueness in the dimension $3$ under the following
assumptions:
\begin{equation}
n=3,\ 1>s>\frac{1}{2},\ \max\{\frac{2+4s(1-\frac{1}{n})}{n-2s},\
\frac{n+2-2s}{n-2s}\}\leq\alpha<\min\{\frac{4}{n-2s},\frac{n+2s}{n-2s}\},
\end{equation}
where the initial datum belongs to $\dot{H}^s$.

In summary, in the subcritical case, the problem of
unconditional uniqueness is left open only when $0\leq s<1$, in the following three cases:

\begin{description}
  \item[Case a] $n=3,4,\ \frac{2+4s(1-\frac{1}{n})}{n-2s}\leq\alpha<1\text{
or
}\max\{\frac{2+4s}{n-2s},1\}\leq\alpha<\min\{\frac{n+2s}{n-2s},\frac{4}{n-2s}\};$
  \item[Case b] $n=5$,  $\frac{2+4s(1-\frac{1}{n})}{n-2s}\leq\alpha\leq1$ with $\alpha<\frac{4}{n-2s}$;
  \item[Case c] $n\geq6$,
  $\frac{2+4s(1-\frac{1}{n})}{n-2s}\leq\alpha<\frac{4}{n-2s}$.
\end{description}

For the critical case
$\alpha=\min\{\frac{n+2s}{n-2s},\frac{4}{n-2s}\}$, we can recall the
known results as follows: Kato firstly proved  unconditional
uniqueness in the dimension $1$ or when $s\geq\frac{n}{2}$ in
\cite{Katou}. Cazenave  \cite{CLN}(Proposition~4.2.13) showed that
when $1\leq s<n/2$ with $n\geq3$,  unconditional uniqueness still
holds. Win and Tsutsumi \cite{WinT} proved  unconditional uniqueness
in the following cases:
\begin{equation}
n=3,\ 1>s>\frac{1}{2},\ \alpha=\frac{4}{n-2s}\text{ and } n=4,5,\
1>s\geq\frac{1}{2},\ \alpha=\frac{4}{n-2s}.
\end{equation}
There are also some gaps for the critical case, especially when
$0<s<\frac{1}{2}$ or high dimensions. In particular, for $0\leq s<1$, the following cases are open:
\begin{description}
  \item[Case a] $n=2$, $\alpha=\frac{n+2s}{n-2s};$
  \item[Case b] $n=3$, $0\leq s\leq\frac{1}{2}$ with
  $\alpha=\frac{n+2s}{n-2s}$;
  \item[Case c] $n=4,5$, $0\leq s<\frac{1}{2}$ with $\alpha=\frac{4}{n-2s}$;
  \item[Case d] $n\geq6$,
  $\alpha=\frac{4}{n-2s}$.
\end{description}

From the above description, the authors in
\cite{Katou},\cite{FurioliT},\cite{Rogers} and \cite{WinT} apply
different methods to obtain various conclusions. The conclusions
they obtained overlap, but do not cover each other. In this article,
in addition to extending the known results to a larger domain of
indices, in particular the case $\alpha<1$, we also give a unified
proof of the results of \cite{FurioliT},\cite{Rogers},\cite{WinT}
and
 \cite{Katou} in either subcritical case or critical case.
Note that the nonlinearity is locally Lipschitz continuous when
$\alpha\geq1$, and locally H\"older continuous when $\alpha<1$. For
this reason, we have to use the different argument in cases $0\le
\alpha <1$ and $\alpha \ge 1$. Firstly, we show the results on the
subcritical case:
\begin{thm}[$\alpha\geq1$]\label{Bg1}
Let $0< s<1$, $n=3,4,5$, and assume
\begin{equation}
         \max\{1,\frac{2s}{n-2s}\}\leq\alpha<\min{\{\frac{4}{n-2s},\frac{n+2s}{n-2s},\frac{4s+4-n/(n-1)}{n-2s}\}}.
\end{equation}
Given $\varphi\in\dot{H}^s$ and  $T>0$, unconditional uniqueness
holds in $L^\infty(0,T;\dot{H}^s)$ for \eqref{NLS}.
\end{thm}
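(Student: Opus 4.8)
The plan is to take two solutions $u_1,u_2\in L^\infty(0,T;\dot H^s)$ with the common datum $\varphi$, set $w=u_1-u_2$, and prove $w\equiv0$. Since $w(0)=0$, Duhamel's formula gives
\[
 w(t)=ic\int_0^t e^{i(t-\tau)\Delta}\bigl(|u_1|^\alpha u_1-|u_2|^\alpha u_2\bigr)(\tau)\,d\tau ,
\]
so $w$ is a pure inhomogeneous term with no free part. Because $\alpha\ge1$ the nonlinearity is locally Lipschitz and satisfies the pointwise bound $\bigl||u_1|^\alpha u_1-|u_2|^\alpha u_2\bigr|\lesssim(|u_1|^\alpha+|u_2|^\alpha)|w|$, which reduces matters to controlling products of the type $|u_i|^\alpha\,w$.

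The central idea is to measure $w$ in a \emph{negative-order} Strichartz space $X_T=L^q(0,T;\dot B^{-\sigma}_{r,2})$, so that the regularity deficit $-\sigma$ is paid for by the Sobolev regularity $s$ carried by the factors $u_i$. First I would fix a (in general non-admissible) Strichartz pair $(\tilde q,\tilde r)$ and apply the generalized inhomogeneous Strichartz estimate for non-admissible pairs (\cite{Vilela}) to get
\[
 \|w\|_{X_T}\ \lesssim\ \bigl\||u_1|^\alpha u_1-|u_2|^\alpha u_2\bigr\|_{L^{\tilde q'}(0,T;\dot B^{-\sigma}_{\tilde r',2})}.
\]
The admissibility window for these non-sharp pairs is exactly where the quantity $n/(n-1)$ appears, and it dictates the upper threshold $\alpha<\frac{4s+4-n/(n-1)}{n-2s}$. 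Next, via a fractional-Leibniz/paraproduct estimate in negative Besov spaces together with the embedding $\dot H^s\hookrightarrow L^{2n/(n-2s)}$, I would bound the right-hand side by
\[
 \lesssim\ T^{\theta}\bigl(\|u_1\|_{L^\infty\dot H^s}^\alpha+\|u_2\|_{L^\infty\dot H^s}^\alpha\bigr)\,\|w\|_{X_T},
\]
where Hölder in time produces the gain $T^\theta$ with $\theta>0$, and the remaining constraints $\alpha<\frac{4}{n-2s}$ and $\alpha<\frac{n+2s}{n-2s}$ (together with $\alpha\ge\max\{1,\frac{2s}{n-2s}\}$) guarantee that the product is a well-defined distribution and that all the Hölder exponents in space are admissible.

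Before iterating I must verify the a priori finiteness $\|w\|_{X_T}<\infty$; this is the improved-regularity step for the difference. If the indices $(r,\sigma)$ are chosen compatibly with the Sobolev embedding $\dot H^s\hookrightarrow\dot B^{-\sigma}_{r,2}$, then $w\in L^\infty(0,T;\dot B^{-\sigma}_{r,2})\hookrightarrow X_T$ on the finite interval, so the norm is finite. With finiteness secured the estimate reads $\|w\|_{X_{T_0}}\le C\,T_0^{\theta}M^\alpha\|w\|_{X_{T_0}}$ with $M=\max_i\|u_i\|_{L^\infty\dot H^s}$; choosing $T_0$ with $CT_0^\theta M^\alpha<1$ forces $w\equiv0$ on $[0,T_0]$. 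Since $M$ is a uniform bound on $[0,T]$ and the estimate is invariant under time translation, iterating on consecutive intervals of length $T_0$ covers $[0,T]$ and yields $u_1=u_2$.

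The main obstacle is the simultaneous balancing of the exponents $q,r,\tilde q,\tilde r$ and the negative regularity $\sigma$. One needs $\sigma$ large enough that the product $|u_i|^\alpha w$ actually lands in the dual space $\dot B^{-\sigma}_{\tilde r',2}$ (this is where $u_i\in\dot H^s$ is consumed through the paraproduct estimate), yet small enough—and the Strichartz pair non-sharp enough—that the inhomogeneous estimate stays valid while still leaving a strictly positive power $\theta$ of $T$; in addition $(r,\sigma)$ must be compatible with the Sobolev embedding used for finiteness. The three-way minimum defining the admissible $\alpha$-range is precisely the intersection of these competing requirements, and checking that this intersection is nonempty for $n=3,4,5$, $0<s<1$, together with establishing the negative-order paraproduct estimate with the claimed exponents, is the technical heart of the proof.
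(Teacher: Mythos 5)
Your overall scheme (negative-order Besov norm for the difference, generalized inhomogeneous Strichartz estimates, bilinear/chain-rule estimate consuming the $\dot H^s$ regularity of $u_1,u_2$, a positive power of $T$ from H\"older in time, absorption, and iteration in time) is exactly the paper's strategy, but your proposal has a genuine gap: your a priori finiteness step forces the auxiliary space to be one into which $\dot H^s$ embeds, i.e.\ $(r,\sigma)$ tied by scaling to $s$, and with that choice the full system of constraints (acceptability, the non-sharp Strichartz conditions, the bilinear-estimate hypotheses, and the requirement $s\geq\sigma$ needed so that $\||u_i|^\alpha\|_{\dot B^{\sigma}_{p_1,2}}$ is controlled by $\|u_i\|_{\dot H^s}^\alpha$) is solvable only for $\alpha\leq\bigl(2+8s(1-\tfrac1n)\bigr)/(n-2s)$. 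This is strictly below the theorem's threshold $\bigl(4s+4-\tfrac{n}{n-1}\bigr)/(n-2s)$ in part of the claimed range: e.g.\ for $n=3$, $s=1/5$ your scheme reaches only $\alpha\leq 46/39\approx 1.18$, while the theorem asserts uniqueness up to $\alpha<33/26\approx 1.27$. To cover the remaining window the paper runs a second case (``better regularity property''): it takes a \emph{different} $\rho$, namely $\frac{1}{\rho}=\frac{\sigma}{n}+\frac12-\frac{s}{n}+\frac{n-2s}{2n}\alpha-\frac2n$, for which $\dot H^s\not\hookrightarrow\dot B^\sigma_{\rho,2}$, so finiteness of $\|u_1-u_2\|_{L^\gamma\dot B^\sigma_{\rho,2}}$ is no longer free. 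It is instead \emph{proved}: the crude bound $|f(u_1)-f(u_2)|\lesssim(|u_1|^\alpha+|u_2|^\alpha)|u_1-u_2|$ places $f(u_1)-f(u_2)$ in $L^{2n/((n-2s)(\alpha+1))}_x\hookrightarrow\dot B^\sigma_{p',2}$, and the \emph{sharp-case} (not non-sharp) inhomogeneous Strichartz estimate then shows the Duhamel integral lies in $L^\gamma\dot B^\sigma_{\rho,2}$. What you call ``the improved-regularity step for the difference'' is in fact only the trivial embedding that each solution already enjoys; the genuinely improved regularity of the difference, which is the point of the paper's Part~3.2, is absent from your proposal, and without it the proof does not close on the stated range of $\alpha$.

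A secondary inaccuracy: you attribute the threshold $\bigl(4s+4-\tfrac{n}{n-1}\bigr)/(n-2s)$ to the admissibility window of the non-sharp Strichartz pairs. In the paper it arises instead from the compatibility of the regularity condition $\sigma+s\geq0$ (needed so that $f'$ evaluated along the segment between the two solutions can be measured in $\dot B^{-\sigma}_{p_1,2}$ by $\dot H^s$) with the upper bound on $\sigma$ coming from the second-case scaling of $\rho$; the Strichartz window alone would not produce this constant. Repairing your argument therefore requires splitting into the two regimes $\alpha\leq\bigl(2+8s(1-\tfrac1n)\bigr)/(n-2s)$ and $\alpha>\bigl(2+8s(1-\tfrac1n)\bigr)/(n-2s)$, with the separate finiteness argument sketched above in the second regime.
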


\begin{thm}[$\alpha<1$]\label{Ls1}
Let $0< s<1$ , $\varphi\in\dot{H}^s$ and
$T>0$. If $\alpha$ satisfies one of the following conditions, then
 unconditional uniqueness holds in $L^\infty(0,T;\dot{H}^s)$ for
\eqref{NLS}:
\begin{itemize}
  \item when $n=3$,
  \begin{equation}\left\{\begin{array}{ll}
                               \frac{2s}{3-2s}<\alpha<1,\ \ &\text{if
                               }s\geq\frac{3}{4},\\
                               \frac{2s}{3-2s}<\alpha<\min{\{1,\frac{2s+\frac{5}{2}}{3-4s}\}},&\text{if
                               }s<\frac{3}{4},\\
                               \end{array}
                               \right.
        \end{equation}
  \item  when $n\geq4$,
  \begin{equation}
  \frac{2s}{n-2s}<\alpha<\min{\{1,\frac{4}{n-2s},\frac{2s+4-\frac{n}{n-1}}{n-4s}\}}.
   \end{equation}
\end{itemize}
\end{thm}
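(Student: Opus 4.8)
The plan is to argue on the difference of two putative solutions, exploiting that this difference is smoother — when measured in a negative-order Besov space — than either solution individually, and to close the resulting estimate through a nonhomogeneous Strichartz inequality of the general (non-admissible) type. Let $u,v\in L^\infty(0,T;\dot H^s)$ solve \eqref{NLS} with $u(0)=v(0)=\varphi$, and set $w=u-v$. By Duhamel's formula
\begin{equation*}
 w(t)=ic\int_0^t e^{i(t-\tau)\Delta}\bigl(|u|^\alpha u-|v|^\alpha v\bigr)(\tau)\,d\tau,
\end{equation*}
and the elementary pointwise bound $\bigl||u|^\alpha u-|v|^\alpha v\bigr|\lesssim(|u|^\alpha+|v|^\alpha)\,|w|$ holds for every $\alpha>0$. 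The feature forcing a treatment separate from Theorem \ref{Bg1} is that for $\alpha<1$ the map $z\mapsto|z|^\alpha z$ is merely H\"older continuous, so the fundamental-theorem-of-calculus factorization available when $\alpha\ge1$ breaks down; instead I would work from the pointwise H\"older bound directly and absorb the regularity that is lost into a negative-order norm.

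First I would fix an auxiliary space $Y([0,\tau])=L^{q}(0,\tau;\dot B^{-\theta}_{r,2})$ with $\theta\ge0$, the point being that a merely H\"older-continuous nonlinearity built from $\dot H^s$ data naturally lands in a \emph{negative} smoothness space, so this is where the estimate can hope to close even though it could not for $u$ or $v$ alone (this is the improved-regularity mechanism announced in the abstract). Next, using a general nonhomogeneous Strichartz estimate in the spirit of Vilela \cite{Vilela}, as employed by Rogers \cite{Rogers} — which admits pairs off the usual admissible line subject to the appropriate scaling and acceptability conditions — I would bound
\begin{equation*}
 \|w\|_{Y([0,\tau])}\lesssim \bigl\||u|^\alpha u-|v|^\alpha v\bigr\|_{L^{\tilde q'}(0,\tau;\dot B^{-\theta}_{\tilde r',2})}.
\end{equation*}

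The heart of the argument is the nonlinear estimate in this negative-order space. The aim is a bound of the form $\||u|^\alpha u-|v|^\alpha v\|_{\dot B^{-\theta}_{\tilde r',2}}\lesssim(\|u\|_{\dot H^s}^\alpha+\|v\|_{\dot H^s}^\alpha)\,\|w\|_{\dot B^{-\theta}_{r,2}}$, obtained by a fractional-Leibniz/paraproduct decomposition that places the low-regularity factor $w$ in $\dot B^{-\theta}_{r,2}$ while distributing the $\alpha$ factors into an $L^p$ reachable from $\dot H^s$ via $\dot H^s\hookrightarrow L^{2n/(n-2s)}$. This is exactly where the index constraints enter: the lower bound $\alpha>\tfrac{2s}{n-2s}$ guarantees enough integrability in the $|u|^\alpha,|v|^\alpha$ factors after Sobolev embedding, whereas the upper bounds — $\alpha<\tfrac{2s+4-n/(n-1)}{n-4s}$ for $n\ge4$, and the $n=3$ thresholds $\alpha<\min\{1,\tfrac{2s+5/2}{3-4s}\}$ — are precisely the limits beyond which no pair with the required negative regularity remains inside the range of the general Strichartz estimate. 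I expect this compatibility requirement, rather than any single inequality, to be the main obstacle: one must simultaneously keep $\theta$ small enough for the paraproduct bound on the H\"older nonlinearity and large enough for a valid Strichartz pair, and the tension between these two demands is what pins down the admissible window for $\alpha$ and produces the dimensional case split (including the $n=3$ subcases at $s=\tfrac34$, where it is the binding upper constraint that changes).

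Finally, inserting the nonlinear estimate and using $u,v\in L^\infty_t\dot H^s$ together with H\"older in time, strict subcriticality yields a genuinely positive power $\tau^{\delta}$, so that
\begin{equation*}
 \|w\|_{Y([0,\tau])}\le C\,\tau^{\delta}\bigl(\|u\|_{L^\infty_t\dot H^s}^\alpha+\|v\|_{L^\infty_t\dot H^s}^\alpha\bigr)\|w\|_{Y([0,\tau])}.
\end{equation*}
Choosing $\tau$ small forces the constant below $1$, hence $w\equiv0$ on $[0,\tau]$; since the admissible length $\tau$ depends only on the time-uniform $\dot H^s$ norms, a standard continuation and connectedness argument then propagates uniqueness across all of $[0,T]$, completing the proof.
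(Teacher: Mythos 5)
Your overall skeleton (Duhamel, a negative-order auxiliary space, general nonhomogeneous Strichartz estimates, absorption for small time, continuation) matches the paper's strategy, but the core nonlinear estimate --- the only genuinely delicate point when $\alpha<1$ --- is not proved in your proposal and, as you have set it up, cannot be. First, your premise is off: for $0<\alpha<1$ the map $z\mapsto |z|^\alpha z$ is still $C^1$, so the fundamental-theorem-of-calculus factorization $f(u)-f(v)=(u-v)\int_0^1 f'(v+\theta(u-v))\,d\theta$ does \emph{not} break down (the paper uses it for $\alpha<1$ exactly as for $\alpha\ge 1$); what fails is the local Lipschitz continuity of $f'$, which is only H\"older continuous of order $\alpha$. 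Second, retreating to the pointwise bound $|f(u)-f(v)|\lesssim(|u|^\alpha+|v|^\alpha)|u-v|$ is of no use inside a negative-order norm: pointwise domination does not control $\dot B^{-\theta}_{\tilde r',2}$ norms, and multiplication by a function that is merely in some $L^p$ is not a bounded operation on negative-order spaces. To obtain your claimed estimate $\|f(u)-f(v)\|_{\dot B^{-\theta}_{\tilde r',2}}\lesssim(\cdots)\,\|w\|_{\dot B^{-\theta}_{r,2}}$ one must, by duality and the product rule (this is the paper's Lemma~\ref{bilinear est}), control the factor $g=\int_0^1 f'(v+\theta(u-v))\,d\theta$ in a \emph{positive}-order space $\dot H^{\theta}_{p_1}$ in addition to an $L^{p_3}$ bound; ``distributing the $\alpha$ factors into an $L^p$'' is simply not enough.

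That positive-order bound on $g$ is precisely where the case $\alpha<1$ differs from $\alpha\ge1$, and it is the ingredient your proposal lacks: since $f'$ is not $C^1$, the ordinary fractional chain rule (Lemma~\ref{Lchainrule}) is unavailable, and one must invoke the fractional chain rule for H\"older continuous functions (Lemma~\ref{FcrfaHcf}, due to Visan), which gives $\|f'(v+\theta(u-v))\|_{\dot H^{-\sigma}_{p_1}}\lesssim\|v+\theta(u-v)\|_{L^{2n/(n-2s)}}^{\alpha+\sigma/s}\|v+\theta(u-v)\|_{\dot H^s}^{-\sigma/s}$ and imposes the additional constraint $-\alpha s<\sigma$. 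It is this constraint, intersected with the acceptability, scaling, and bilinear-estimate conditions, that actually generates the upper bounds $\frac{2s+4-n/(n-1)}{n-4s}$ for $n\ge4$ and $\frac{2s+5/2}{3-4s}$ for $n=3$; your proposal attributes these thresholds to Strichartz-pair compatibility alone, which does not produce them. Finally, to cover the full range of $\alpha$ in the statement the paper needs a second regime (the analogue of its Part~3.2): for $\alpha\ge\frac{2+4s(1-1/n)}{(n-2s)-4s(1-1/n)}$ one first proves, via the sharp case of Lemma~\ref{non-str-est} and the crude bound of $f(u)-f(v)$ in $L^{2n/((n-2s)(\alpha+1))}$, that $u-v$ lies in a space $L^\gamma(0,T;\dot H^\sigma_\rho)$ with $\frac{1}{\rho}=\frac{\sigma}{n}+\frac{1}{2}-\frac{s}{n}+\frac{n-2s}{2n}\alpha-\frac{2}{n}$, a space which does \emph{not} contain $u$ and $v$ individually; your single-space scheme omits this better-regularity step, without which the admissible window for $\alpha$ is strictly smaller than the one claimed in the theorem.
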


\begin{rem}
According to our results, the following cases for unconditional
uniqueness are still left open for $0\leq s<1$:
\begin{description}
  \item[Case a] $n=3,4$, $\min\{\frac{4s+4-n/(n-1)}{n-2s},\frac{2s+4-n/(n-1)}{n-4s}\}\leq\alpha<\min\{\frac{4}{n-2s},\frac{n+2s}{n-2s}\}$;
  \item[Case b] $n\geq5$,
  $\frac{2s+4-n/(n-1)}{n-4s}\leq\alpha<\frac{4}{n-2s}$;
  \item[Case c] $n\geq3$, $s=0$,
  $\frac{2}{n}\leq\alpha<\frac{4-n/(n-1)}{n}$.
\end{description}
\end{rem}

\begin{rem}
It is not difficult to verify that the results of \cite{FurioliT},
\cite{Rogers} and \cite{WinT} (for $n=3$ and $\alpha<4/(n-2s)$) are
covered by Theorem~\ref{Bg1} and Theorem~\ref{Ls1}. The conclusions
of \cite{Katou} are also included in our results when
$\alpha>\frac{2s}{n-2s}$.
\end{rem}
The strategy of our proof is similar to the one used by Furioli and
Terraneo in \cite{FurioliT}, and it makes use of the negative order
homogeneous Besov space $\dot{B}^\sigma_{\rho,2}$ and Sobolev space
$\dot{H}^\sigma_\rho$ respectively.  For the choice  of $\rho$, in
addition to that used by Furioli and Terraneo in \cite{FurioliT}, we
can also select different indices. Generally speaking, if $u,v\in
L^\infty(I;\dot{H}^s)$ are two solutions of \eqref{NLS},  in order
that $u-v\in L^\infty(I;\dot{B}^\sigma_{\rho,2})$, the relationship
$s-\frac{n}{2}=\sigma-\frac{n}{\rho}$ is natural by the embedding
$\dot{H}^s\hookrightarrow\dot{B}^\sigma_{\rho,2}$ with $\sigma<0$.
However, the difference of two solutions sometimes has better
regularity in certain spaces than each of the solutions. We show
this better regularity for the subcritical case in the Part~3.2 and
for the critical case in the Part~4.3.1.

We also use  nonhomogeneous Strichartz estimates, which are
different from those used in \cite{FurioliT}. Furioli and Terraneo
applied the classical Strichartz estimates:
\begin{equation*}
\|\int^t_0e^{i(t-s)\Delta}f(s)\,ds\|_{L^{q_1}(I;L^{r_1})}\leq
C\|f\|_{L^{{q_2}'}(I;L^{{r_2}'})},
\end{equation*}
where $(e^{it\Delta }) _{ t\in \R }$ is the Schr\"o\-din\-ger group
and $f\in L^{{q_2}'}(I;L^{{r_2}'})$, and  the pairs $(q_i,r_i),\
i=1,2$ satisfy the admissibility conditions
$\frac{2}{q_i}=n(\frac{1}{2}-\frac{1}{r_i})$ and $2\leq r_i\leq
2n/(n-2)$ $(2\leq r_i\leq\infty \text{ if } n=1, \ 2\leq r_i<\infty
\text{ if } n=2)$. Therefore, in order to make the selected $\rho$
part of an admissible pair, the condition $s-1\leq\sigma\leq s$
should be satisfied. Furthermore, Furioli and Terraneo only settled
the case  $\alpha>1$, where the nonlinearity is locally Lipschitz
continuous. Their method does not apply to the case $\alpha<1$, when
the nonlinearity is not locally Lipschitz. We apply the general
Strichartz estimates, which are described in
Lemma~\ref{non-str-est}, $\rho$ being restricted to be part of a
``general" admissible pair. This improves the previous restriction
on $\sigma $. Our restrictions on $\sigma $, for instance given by
\eqref{sigma-cond1}-\eqref{sigma-cond2} or \eqref{sigma-cond} when
$\alpha\geq1$ and \eqref{sigma-cond1-1}-\eqref{sigma-cond2-2} or
\eqref{sigma-cond1-3}-\eqref{sigma-cond2-4} when $\alpha<1$,
corresponding to the different choices $\rho$. In fact, if
$s\geq\frac{1}{2},\ \alpha>\max\{1,\frac{2}{n-2s}\}$, in  light of
\eqref{sigma-cond1}-\eqref{sigma-cond2}, we may choose $\sigma=s-1$,
which is the choice made by Win and Tsutsumi in \cite{WinT}. For the
case $\alpha<1$, we use the fractional chain rule for a H\"older
continuous function (Lemma~\ref{FcrfaHcf}), then a result similar to
Lemma~2.3 in \cite{FurioliT} is obtained, which is applied to
control the nonlinearity. From the proof of the case $\alpha\geq1$,
we can see that the bound $(4s+4-\frac{n}{n-1})/(n-2s)$ comes from
the condition $\sigma+s\geq0$, which ensures \eqref{est3} to hold. A
similar argument can be used in the case $\alpha<1$.

 We also consider the
critical cases in the following results:
\begin{thm}\label{cri1}
Let $\alpha=\frac{n+2s}{n-2s}$ and $n=2$ with $0<s<1$ or $n=3$ with
$\frac{1}{4}<s<\frac{1}{2}$. Given $\varphi\in\dot{H}^s$ and $T>0$,
unconditional uniqueness holds in $L^\infty(0,T;\dot{H}^s)$ for
\eqref{NLS}.
\end{thm}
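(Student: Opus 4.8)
The plan is to run the contraction argument of Theorem~\ref{Bg1} almost verbatim, the only genuinely new feature being that at the critical exponent $\alpha=\frac{n+2s}{n-2s}$ there is no residual power of the time length to exploit. Let $u,v\in L^\infty(0,T;\dot H^s)$ solve \eqref{NLS} with $u(0)=v(0)=\varphi$ and put $w=u-v$. Subtracting the two Duhamel formulas gives
\begin{equation*}
w(t)=ic\int_0^t e^{i(t-t')\Delta}\bigl(|u|^\alpha u-|v|^\alpha v\bigr)(t')\,dt',
\end{equation*}
so it is enough to show that $w$ vanishes on some initial interval $[0,\tau]$ with $\tau$ depending only on $\|u\|_{L^\infty(0,T;\dot H^s)}$ and $\|v\|_{L^\infty(0,T;\dot H^s)}$; since these are fixed and finite, finitely many translates of the argument then cover $[0,T]$. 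Throughout I fix an auxiliary pair $(\sigma,\rho)$ with $\sigma<0$ and $s-\frac n2=\sigma-\frac n\rho$, chosen so that $(\sigma,\rho)$ is part of a general admissible pair in the sense of Lemma~\ref{non-str-est}; the embedding $\dot H^s\hookrightarrow\dot B^\sigma_{\rho,2}$ then forces $u,v,w\in L^\infty(0,T;\dot B^\sigma_{\rho,2})$ with no further hypothesis.

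First I would prove the improved regularity of the difference, which is the heart of the critical case. A priori $w$ is only in $L^\infty_t\dot B^\sigma_{\rho,2}$; inserting the Duhamel identity into the nonhomogeneous Strichartz estimate of Lemma~\ref{non-str-est} and estimating $|u|^\alpha u-|v|^\alpha v$ by the fractional Leibniz and chain rules---applicable here since $\alpha>1$ throughout both admissible ranges ($\alpha=\frac{1+s}{1-s}$ when $n=2$ and $\alpha\in(\frac75,2)$ when $n=3$)---upgrades $w$ to a genuine Strichartz space $L^q(0,T;\dot B^\sigma_{\rho,2})$ with $q<\infty$. This is a space the individual solutions $u,v$ are \emph{not} assumed to occupy, and the extra integrability of the difference is precisely what ``improved regularity'' means and what the critical argument cannot do without. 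The condition $\sigma+s\ge0$ is exactly what lets this nonlinear estimate close, just as it does in the proof of Theorem~\ref{Bg1}.

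With $w$ now carrying a finite time exponent, I would close by a short-interval contraction. The pointwise bound $\bigl||u|^\alpha u-|v|^\alpha v\bigr|\lesssim(|u|^\alpha+|v|^\alpha)|w|$, together with H\"older in space and the Sobolev embedding $\dot H^s\hookrightarrow L^{\frac{2n}{n-2s}}$, controls the nonlinear difference in the dual Strichartz norm by $C\bigl(\|u\|_{L^\infty\dot H^s}^\alpha+\|v\|_{L^\infty\dot H^s}^\alpha\bigr)\|w\|_{L^q(J;\dot B^\sigma_{\rho,2})}$ on any subinterval $J$. By criticality the spatial exponents are saturated, so no smallness is available in space; instead it comes from time, where the crude inclusion $L^\infty_t(J)\hookrightarrow L^{p_0}_t(J)$ contributes a factor $|J|^{1/p_0}$ as soon as the temporal H\"older exponents are set up with $p_0<\infty$. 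This produces $\|w\|_{L^q(J;\dot B^\sigma_{\rho,2})}\le C|J|^{1/p_0}\|w\|_{L^q(J;\dot B^\sigma_{\rho,2})}$, so for $|J|$ small the factor is $<1$ and $w\equiv0$ on $J$; iterating gives $w\equiv0$ on $[0,T]$.

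The hard part will be the simultaneous fulfilment of every exponent constraint with no room to spare: one needs $(\sigma,\rho)$ with $\sigma<0$, the scaling relation $s-\frac n2=\sigma-\frac n\rho$, membership of a general admissible pair, and $\sigma+s\ge0$, while the spatial H\"older--Sobolev balance in the nonlinear estimate must match \emph{exactly} because the exponent is critical. It is this rigidity, rather than any soft argument, that singles out $n=2$ for all $0<s<1$ and $n=3$ for $\frac14<s<\frac12$; showing that an admissible $(\sigma,\rho)$ exists in these ranges (and fails outside them) is where the real work lies. A subordinate technical point is that the fractional chain rule for $|u|^\alpha u$ with the non-integer $\alpha$ occurring here must be carried out in the negative-order Besov scale rather than in a Lebesgue space.
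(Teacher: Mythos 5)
Your scheme has a structural gap that no choice of parameters can repair. You pin the contraction space to the embedding relation $s-\frac n2=\sigma-\frac n\rho$, so that $w\in L^\infty(0,T;\dot B^\sigma_{\rho,2})$ comes for free. But then the product estimate needed to close the argument (Lemma~\ref{bilinear est}, applied with $g=w$ and $f=\int_0^1 f'(v+\theta(u-v))\,d\theta$) requires the exponent $p_2$ defined by $\frac1{p_2}=\frac1r+\frac\sigma n$ to be finite, where $\frac1r=\frac1{\rho'}-\frac1{p_3}$ and $\frac1{p_3}=\frac{(n-2s)\alpha}{2n}$ is forced because $|u|^\alpha,|v|^\alpha$ are controlled only in $L^{\frac{2n}{(n-2s)\alpha}}$ via $\dot H^s\hookrightarrow L^{\frac{2n}{n-2s}}$. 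Computing, $\frac1{p_2}=\frac12+\frac sn-\frac{n-2s}{2n}\alpha$: this is \emph{independent of} $\sigma$ and vanishes identically at $\alpha=\frac{n+2s}{n-2s}$, so the Sobolev step $\dot H^{-\sigma}_r\hookrightarrow L^{p_2}$ degenerates into an embedding into $L^\infty$, which fails. The ``real work'' you defer---finding an admissible $(\sigma,\rho)$ within your framework---is therefore impossible: this is exactly why the subcritical argument of Part~3.1 requires $\alpha<\frac{n+2s}{n-2s}$ strictly. Relatedly, your ``improved regularity'' step is vacuous: on a finite interval $L^\infty(0,T)\subset L^q(0,T)$, so passing from $L^\infty_t\dot B^\sigma_{\rho,2}$ to $L^q_t\dot B^\sigma_{\rho,2}$ with the \emph{same} spatial index carries no new information and does nothing to address the failure above.

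The idea you are missing, which is the paper's actual proof, is that the better regularity of the difference must occur in the \emph{spatial} index: one decouples $b$ from $\sigma$, violating the embedding relation (for $n=2$, $s<\frac12$ the paper takes $\sigma=-s+\varepsilon$ but $\frac1b=\frac12-s$; for $n=3$ it takes $\sigma=-s$ and $\frac1b<\frac12-\frac{2s}3$), so that $\dot H^s\not\hookrightarrow\dot H^\sigma_b$ and the slack $\frac1{p_2}=\bigl(\frac\sigma n+\frac{n-2s}{2n}\bigr)-\frac1b>0$ reappears in Lemma~\ref{bilinear est}. The price is that $u-v\in L^a(0,T;\dot H^\sigma_b)$ must be \emph{proved}, and here the critical exponent is actually a help rather than an obstacle: since $\alpha+1=\frac{2n}{n-2s}$, H\"older and Sobolev give $f(u)-f(v)\in L^\infty(0,T;L^1)$, and combining $L^1\hookrightarrow\dot H^\sigma_{\frac n{n+\sigma}}$ (valid as $\sigma<0$) with the generalized Strichartz estimate of Lemma~\ref{non-str-est} yields the required bound. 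The contraction then runs in $L^a\dot H^\sigma_b$, and H\"older in time produces a factor $T^{\frac12-\frac1a}>0$; note that, contrary to your opening claim, a power of $T$ \emph{is} available in Theorem~\ref{cri1} because $\frac{n+2s}{n-2s}<\frac4{n-2s}$ throughout its hypotheses (see Remark~1.7)---the genuinely no-$T$-factor situation is Theorem~\ref{cri2}, which requires the additional high-low frequency decomposition.
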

\begin{thm}\label{cri2}
Let $\alpha=\frac{4}{n-2s}$ and $n=3$ with $1/2<s<1$ or $n=4$ with
$1/3<s<1$ or $n\geq5$ with $s_0<s<1$, where $s_0$ is the smallest
solution of equation $4(n-1)s^2-(2n^2+8n-8)s+n^2=0$. Given
$\varphi\in\dot{H}^s$ and $T>0$, unconditional uniqueness holds in
$C([0,T];\dot{H}^s)$ for \eqref{NLS}.
\end{thm}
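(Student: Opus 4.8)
The plan is to reduce the statement to proving that $w:=u-v$ vanishes, where $u,v\in C([0,T];\dot H^s)$ are two solutions of \eqref{NLS} issued from the same datum $\varphi$. Since $w(0)=0$, Duhamel's formula gives
\[
w(t)=ic\int_0^t e^{i(t-\tau)\Delta}\bigl(|u|^\alpha u-|v|^\alpha v\bigr)(\tau)\,d\tau .
\]
First I would fix, as in the subcritical Theorems~\ref{Bg1}--\ref{Ls1}, a negative-order target space $\dot B^\sigma_{\rho,2}$ (or $\dot H^\sigma_\rho$) for $w$, with $\sigma<0$ and the scaling relation $s-\frac n2=\sigma-\frac n\rho$, so that the embedding $\dot H^s\hookrightarrow\dot B^\sigma_{\rho,2}$ gives $w\in L^\infty(I;\dot B^\sigma_{\rho,2})$ a priori. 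Passing to a negative index is what enlarges the range of admissible $\rho$ for which the general nonhomogeneous Strichartz estimate of Lemma~\ref{non-str-est} can be applied, and is exactly what lets the argument descend to $s<1$ in the dimensions listed.

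Next I would set up the absorption inequality on a subinterval $I$. Applying the nonhomogeneous Strichartz estimate to the Duhamel term bounds a space-time norm $\|w\|_{X(I)}$, built from a general admissible pair associated with $\rho$, by the dual-Strichartz norm of the nonlinear difference. To handle $|u|^\alpha u-|v|^\alpha v$ I would combine the pointwise bound $\bigl||u|^\alpha u-|v|^\alpha v\bigr|\lesssim(|u|^\alpha+|v|^\alpha)|w|$ with a fractional Leibniz / fractional-chain-rule estimate in the Besov scale (the analogue of Lemma~2.3 in \cite{FurioliT}, which the improved-regularity discussion of Part~4.3.1 supplies at the negative index $\sigma$), distributing the $\alpha$ powers of $u$ or $v$ against the single factor $w$. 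By Sobolev embedding $\dot H^s\hookrightarrow L^{2n/(n-2s)}$ one has $|u|^\alpha,|v|^\alpha\in L^\infty(I;L^{n/2})$ because $\alpha=\frac4{n-2s}$, and H\"older in space and time---with exponents forced by the scaling relation and the admissibility of the pairs---yields an estimate of the form $\|w\|_{X(I)}\le C\,\eta(I)\,\|w\|_{X(I)}$, where $\eta(I)$ is a scaling-invariant quantity measuring $u,v$ on $I$.

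The decisive feature of the critical exponent $\alpha=\frac4{n-2s}$ is that $\eta(I)$ carries \emph{no} positive power of $|I|$: the naive gain that closes the subcritical Theorems~\ref{Bg1}--\ref{Ls1} is absent, and this is precisely why the statement is cast in $C([0,T];\dot H^s)$ rather than in $L^\infty$. The continuity in time makes the orbit $\{u(t):t\in[0,T]\}$ compact in $\dot H^s$, hence $\lim_{N\to\infty}\sup_{t\in[0,T]}\|P_{>N}u(t)\|_{\dot H^s}=0$; splitting $u=P_{\le N}u+P_{>N}u$ (and likewise $v$), the high-frequency part contributes a small critical factor while the frequency-localized low part is genuinely subcritical and contributes a factor $|I|^{\delta}$ with $\delta>0$. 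Choosing first $N$ large and then $|I|$ small makes $\eta(I)<\frac1{2C}$, so the absorption forces $\|w\|_{X(I)}=0$; covering $[0,T]$ by finitely many such intervals and stepping across them gives $w\equiv0$.

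The main obstacle is the simultaneous feasibility of the index choices, together with the critical smallness. One must select $\rho$---equivalently the general admissible pair underlying $X$---so that the nonhomogeneous Strichartz estimate holds, the fractional-chain-rule bound for the nonlinearity is available at $\sigma$, the constraint $\sigma+s\ge0$ guaranteeing the key nonlinear estimate is met, and the H\"older exponents close, all at once; tracking this system is where the threshold emerges, reducing for $n\ge5$ to $4(n-1)s^2-(2n^2+8n-8)s+n^2\le0$ whose smallest root is $s_0$, and for $n=3,4$ to the bounds $s>1/2$ and $s>1/3$. The genuinely delicate point, however, is justifying the critical smallness of $\eta(I)$ without any auxiliary space: this rests on the compactness supplied by continuity in time and on the improved regularity of $w$ established in Part~4.3.1, and it is the step in which the critical case departs essentially from the subcritical one.
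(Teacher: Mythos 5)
Your overall architecture does match the paper's: negative-order spaces with general Strichartz pairs from Lemma~\ref{non-str-est}, the improved regularity of $w=u-v$, a high/low frequency splitting in which the high part is made uniformly small via continuity in time and the low part gains a positive power of $T$, absorption, and iteration over subintervals; you even identify the correct threshold quadratic $4(n-1)s^2-(2n^2+8n-8)s+n^2\le0$. The gap is in \emph{where} you apply the Littlewood--Paley projection. You split the solutions, $u=P_{\le N}u+P_{>N}u$ (likewise $v$), whereas the paper writes $f(u)-f(v)=\big[\int_0^1 f'(\theta u+(1-\theta)v)\,d\theta\big](u-v)$ and splits the \emph{composed} nonlinearity, $\int_0^1 f'\,d\theta=\int_0^1 P_{\le N}f'\,d\theta+\int_0^1 P_{>N}f'\,d\theta$. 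This is not cosmetic. Since $w$ lives only in negative-order spaces, every product $F\,w$ must go through the bilinear estimate of Lemma~\ref{bilinear est}, which requires control of $F$ in the positive-order norm $\dot H^{-\sigma}_{p_1}$ \emph{and} in $L^{p_3}$; pointwise bounds such as $|f(u)-f(v)|\lesssim(|u|^\alpha+|v|^\alpha)|w|$ are useless at fractional order $\sigma<0$. With the paper's splitting, the high piece $P_{>N}\big[f'(\theta u+(1-\theta)v)\big]$ is the tail of a family that is compact in $\dot H^{-\sigma}_{p_1}\cap L^{p_3}$ (this is exactly where $C([0,T];\dot H^s)$ enters), hence small in \emph{both} norms uniformly in $(t,\theta)$, which is \eqref{n4bd5}; and the low piece is genuinely band-limited, so Bernstein's inequality converts it into subcritical norms producing the factor $T^{1-\frac1\lambda-\frac1a}N^{n(\frac1b-\frac{2\sigma-2s+n}{2n})}$.

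With your splitting, neither mechanism is available as stated. The uniform smallness $\sup_t\|P_{>N}u(t)\|_{\dot H^s}\to0$ yields smallness of the high-frequency error $f'(g)-f'(P_{\le N}g)$, $g=\theta u+(1-\theta)v$, only in Lebesgue norms; you still need it small in $\dot H^{-\sigma}_{p_1}$, and no tool in the paper delivers that. For $n\ge5$ one has $\alpha<1$, so $f'$ is merely H\"older continuous and there is no chain rule for \emph{differences} of compositions at positive order. For $n=4$ the natural route through $f''$ (which is H\"older of order $\alpha-1=\frac{2s}{4-2s}$) requires, via Lemma~\ref{FcrfaHcf}, that $-\sigma<\alpha-1$, while \eqref{sigmaconditions} forces $-\sigma>\frac23-s$; these are incompatible for $\frac13<s<\frac{11-\sqrt{73}}{6}\approx0.41$, i.e.\ in part of the very range the theorem asserts. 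Moreover $f'(P_{\le N}g)$ is \emph{not} frequency-localized---composition spreads Fourier support---so Bernstein cannot be applied to it directly (this half can be repaired by applying Bernstein to $P_{\le N}g$ before composing, but the high-frequency half cannot). Without smallness of the high piece in the positive-order norm, the absorption coefficient is merely bounded, not less than $1$, and the proof does not close. The fix is the paper's choice: project \emph{after} composing.
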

\begin{rem}
It follows from Theorem \ref{cri1} and \ref{cri2} that unconditional
uniqueness in the critical case is left open in the following cases:
\begin{description}
  \item [Case a] $n=2$, $\alpha=1$ and $s=0$;
  \item [Case b] $n=3$, $\alpha=\frac{n+2s}{n-2s}$ and $0\leq
  s\leq\frac{1}{4}$ or $s=\frac{1}{2}$;
  \item [Case c] $n=4$, $\alpha=\frac{4}{n-2s}$ and $0\leq
  s\leq\frac{1}{3}$;
  \item [Case d] $n\geq5$, $\alpha=\frac{4}{n-2s}$ and $0\leq s\leq
  s_0$.
\end{description}
\end{rem}
\begin{rem}
Note that Theorem~\ref{cri1} states uniqueness of solutions in
$L^\infty (0,T; \dot H^s)$, while Theorem~\ref{cri2} states
uniqueness for solutions in a stronger sense, i.e. solutions in
$C([0, T];\dot H^s)$. The fundamental reason is that, under the
assumptions of Theorem 1.5, when estimating the difference of two
solutions, there comes a factor of $T$ in the right-had side. So we
can choose $T$ sufficiently small so that the right hand side is
absorbed by the left hand side. However, under the assumptions of
Theorem \ref{cri2}, the coefficient is no longer dependent on time.
A similar difficulty appears in \cite{CLN} and \cite{WinT}. Using an
argument inspired by \cite{CLN,WinT}, we divide the nonlinearity by
high-low frequencies and use the norms
$L^\gamma_t\dot{H}^\sigma_\rho\cap L^a_t\dot{H}^\sigma_b$, where the
parameters $\sigma$, $\gamma$, $\rho$, $a$ and $b$ are chosen in
Section~4.
\end{rem}

\emph{\textbf{Notation:}}$H^s$ is the Sobolev space and $\dot{H}^s$
is the homogeneous Sobolev space, see Section~6.2 and Section~6.3 of
\cite{BerghL} respectively. Similarly, $B^s_{p,q}$ and
$\dot{B}^s_{p,q}$ are the Besov spaces and the homogeneous Besov
spaces, as defined in Section~6.2 and Section~6.3 of \cite{BerghL}.

The paper is organized as follows: in Section 2, we state and prove
some preparatory lemmas; in Section 3, we give the proofs of
Theorem~\ref{Bg1} and Theorem~\ref{Ls1}; Section 4 is devoted to the
proofs of Theorem~\ref{cri1} and Theorem~\ref{cri2}. Finally, we
present four figures at the end of the paper, displaying in
dimensions $n=3$, $n=4$, $n=5$ and $n\ge 6$, respectively, the
various regions where unconditional uniqueness is known or is still
an open problem.
\section{Preliminaries}
In this section, we present some  lemmas which we need. The first
one is nonhomogeneous Strichartz estimate which is due to Foschi
\cite{Foschi}. This estimate extends results of Strichartz
\cite{Strichartz}, Ginibre and Velo \cite{GINVEL}, Yajima
\cite{Yajima}, Cazenave and Weissler \cite{CAZWEI}, Keel and Tao
\cite{KeelT}.
\begin{definition}
 We say that the pair $(q,r)$ is $\frac{n}{2}$-acceptable if
\begin{equation}
1\leq q<\infty,\ 2\leq r\leq\infty,\
\frac{1}{q}<n(\frac{1}{2}-\frac{1}{r}),\ \text{or}\
(q,r)=(\infty,2).
\end{equation}
\end{definition}
\begin{lem}[Nonhomogeneous Strichartz estimate]\label{non-str-est}
Given any $\sigma\in\mathbb{R}$, the following properties holds:

Let $I$ be an interval of $\mathbb{R}$,  $J=\bar{I}$, and $0\in J$.
If $(q,r)$ is a $\frac{n}{2}$-acceptable pair and $f\in
L^{q'}(I;\dot{H}^{\sigma}_{r'})$, then for every
$\frac{n}{2}$-acceptable pair $(\gamma,\rho)$, there exists a
constant $C$ independent of $I$ such that
\begin{equation}\label{sob}
\|\int^t_0e^{i(t-s)\Delta}f(s)\,ds\|_{L^\gamma(I;\dot{H}^\sigma_\rho)}\leq
C\|f\|_{L^{q'}(I;\dot{H}^{\sigma}_{r'})},
\end{equation}
when $\gamma,\rho,q$ and $r$ verify the scaling condition
\begin{equation}\label{non-str-est-cond1}
\frac{1}{q}+\frac{1}{\gamma}=\frac{n}{2}(1-\frac{1}{r}-\frac{1}{\rho})
\end{equation}
and satisfy one of the following sets of conditions:
\begin{itemize}
  \item if $n=2$, we also require that $r,\rho<\infty$;
  \item if $n\geq3$, we distinguish two cases,
\end{itemize}

-non sharp case:
\begin{eqnarray}
&\frac{1}{q}+\frac{1}{\gamma}<1,\label{non-str-est-cond2}\\
&(\frac{n}{2}-1)\frac{1}{r}\leq\frac{n}{2\rho},\quad(\frac{n}{2}-1)\frac{1}{\rho}\leq\frac{n}{2r}\label{non-str-est-cond3};
\end{eqnarray}

-sharp case:
\begin{eqnarray}
&\frac{1}{q}+\frac{1}{\gamma}=1,\label{non-str-est-cond4}\\
&(\frac{n}{2}-1)\frac{1}{r}<\frac{n}{2\rho},\quad(\frac{n}{2}-1)\frac{1}{\rho}<\frac{n}{2r},\label{non-str-est-cond5}\\
&\frac{1}{r}\leq\frac{1}{q},\quad\frac{1}{\rho}\leq\frac{1}{\gamma}.&\label{non-str-est-cond6}
\end{eqnarray}

 The Sobolev space can be replaced by Besov space, where
the conditions $\gamma,q\geq2$ have to hold.
\end{lem}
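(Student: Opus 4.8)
Since the statement is essentially Foschi's inhomogeneous Strichartz theorem \cite{Foschi}, the plan is to reconstruct his bilinear argument, first isolating the role of $\sigma$. Because $|\nabla|^\sigma=(-\Delta)^{\sigma/2}$ is a Fourier multiplier, it commutes both with the Schr\"odinger group and with the Duhamel integral, while the norm on $\dot H^\sigma_\rho$ is by definition $\||\nabla|^\sigma\cdot\|_{L^\rho}$. Hence \eqref{sob} for a given $\sigma$ is exactly the $\sigma=0$ inequality applied to $|\nabla|^\sigma f$, and it suffices to prove
\[
\Big\|\int_0^t e^{i(t-s)\Delta}f(s)\,ds\Big\|_{L^\gamma(I;L^\rho)}\le C\,\|f\|_{L^{q'}(I;L^{r'})}
\]
for every pair of $\frac{n}{2}$-acceptable $(q,r)$ and $(\gamma,\rho)$ obeying the scaling relation \eqref{non-str-est-cond1} and the sharp or non-sharp conditions.

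The heart of the matter is the associated untruncated bilinear form $B(f,g)=\iint\langle e^{i(t-s)\Delta}f(s),g(t)\rangle\,ds\,dt$, which I would estimate directly rather than by composing two homogeneous Strichartz inequalities: the latter route reaches only genuinely admissible pairs $\frac{2}{q}=n(\frac12-\frac1r)$, whereas the acceptable range is strictly larger and forces the bilinear method. The sole analytic input is the generalized dispersive bound $|\langle e^{i\tau\Delta}f,g\rangle|\lesssim|\tau|^{-\beta}\|f\|_{L^{r'}}\|g\|_{L^{\rho'}}$ with $\beta=\frac{n}{2}(1-\frac1r-\frac1\rho)$, obtained by interpolating the dual-exponent decay estimates $L^{r'}\to L^r$ and $L^{\rho'}\to L^\rho$ against the unitarity $L^2\to L^2$; the acceptability inequalities \eqref{non-str-est-cond3} (respectively \eqref{non-str-est-cond5}) are precisely the constraints that place the off-diagonal pair $(r,\rho)$ inside the region where this decay is valid. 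Decomposing the time separation dyadically, $|t-s|\sim 2^j$, and applying this bound on each annulus, the scaling relation \eqref{non-str-est-cond1} is exactly what makes every dyadic piece scale identically, so the whole estimate reduces to the summability of the resulting series.

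The crux, and the main obstacle, is the summation over these dyadic scales. The naive per-annulus estimate, coming from the decay bound and Young's inequality in time, carries no gain: the scaling relation \eqref{non-str-est-cond1} forces each dyadic contribution to be comparable to $\|f\|\,\|g\|$, so the series does not converge absolutely. This is the familiar Keel--Tao difficulty, resolved by a bilinear real-interpolation argument in which one proves the dyadic estimate simultaneously for two nearby exponent pairs and interpolates to manufacture a genuine negative power $2^{-\varepsilon j}$. In the non-sharp case the strict inequalities \eqref{non-str-est-cond3} supply exactly the slack needed to run this interpolation and sum absolutely; on the sharp line $\frac1q+\frac1\gamma=1$ this slack vanishes, the summation becomes truly borderline, and one must invoke the finer conditions \eqref{non-str-est-cond5} together with the ordering relations \eqref{non-str-est-cond6}, namely $\frac1r\le\frac1q$ and $\frac1\rho\le\frac1\gamma$, which control the restriction to $\{s<t\}$. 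Equivalently, whenever $q'<\gamma$ the retarded truncation can be handled abstractly by the Christ--Kiselev lemma, leaving only the diagonal case $q'=\gamma$ to be treated by hand. The planar restriction $r,\rho<\infty$ when $n=2$ is needed solely to skirt the failure of the dispersive estimate at the $L^\infty$ endpoint.

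Finally, for the Besov formulation I would run the scalar $L^\rho$ estimate frequency block by frequency block, applying the inequality just proved to each Littlewood--Paley piece of $f$ and then taking the $\ell^2$ norm over the dyadic frequencies of both sides. Interchanging this $\ell^2$ summation with the $L^\gamma_t$ and $L^{q'}_t$ time integrals via Minkowski's inequality is legitimate exactly when $\gamma,q\ge 2$, which accounts for that extra hypothesis.
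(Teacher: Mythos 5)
Your reduction of the case $\sigma\neq 0$ to $\sigma=0$ (commuting $|\nabla|^\sigma$ with the group and the Duhamel integral) and your Besov argument (applying the scalar estimate blockwise in a Littlewood--Paley decomposition and interchanging $\ell^2_j$ with the time norms by Minkowski, which is exactly where $\gamma,q\geq2$ enters) coincide with the paper's proof. But the paper does not reprove the scalar ($\sigma=0$) estimate: it simply quotes it from \cite{Foschi}. The only part of your proposal that goes beyond the paper is your reconstruction of Foschi's bilinear argument, and that reconstruction rests on a false estimate. Your ``sole analytic input'', the fixed-time bound $|\langle e^{i\tau\Delta}f,g\rangle|\lesssim|\tau|^{-\beta}\|f\|_{L^{r'}}\|g\|_{L^{\rho'}}$ with $\beta=\frac{n}{2}(1-\frac1r-\frac1\rho)$, is \emph{false} whenever $r\neq\rho$ --- and the non-dual pairs $r\neq\rho$ are precisely what the lemma is for, and what the paper uses in Sections 3 and 4.

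To see why it fails: the bound is equivalent to $\|e^{i\tau\Delta}\|_{L^{r'}\to L^{\rho}}\lesssim|\tau|^{-\beta}$, and the factorization $e^{i\tau\Delta}f=c_n\tau^{-n/2}M_\tau\,[\mathcal{F}(M_\tau f)](\cdot/2\tau)$ with $M_\tau(x)=e^{i|x|^2/4\tau}$ shows that, for each fixed $\tau$, this is equivalent to $\|\mathcal{F}g\|_{L^\rho}\lesssim|\tau|^{\frac{n}{2}(\frac1r-\frac1\rho)}\|g\|_{L^{r'}}$ for all $g$; since the left-hand side does not depend on $\tau$, letting $\tau\to0$ or $\tau\to\infty$ forces $\mathcal{F}\equiv0$ unless $r=\rho$. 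Nor can the bound be produced by the interpolation you describe: the three bilinear estimates at your disposal have exponent pairs $(r',r')$, $(\rho',\rho')$ and $(2,2)$, all on the diagonal, and any multilinear interpolation of diagonal estimates stays on the diagonal. Consequently, conditions \eqref{non-str-est-cond3} and \eqref{non-str-est-cond5} are not ``the region where this decay is valid''; no off-diagonal fixed-time decay is valid at all. What Foschi actually does is estimate the time-localized bilinear form $T_j(F,G)=\iint_{|t-s|\sim2^j}\langle e^{-is\Delta}F(s),e^{-it\Delta}G(t)\rangle\,ds\,dt$ using inputs that exist only after integration in time: the diagonal dispersive bound combined with H\"older/Young in the time variables, and the energy/dual homogeneous Strichartz estimates, which do admit two \emph{different} admissible pairs and are the true source of off-diagonal space exponents; bilinear real interpolation among these localized estimates (with time and space exponents varying jointly) produces the $2^{-\varepsilon j}$ gain, and the acceptability conditions \eqref{non-str-est-cond3}, respectively \eqref{non-str-est-cond5}--\eqref{non-str-est-cond6}, delimit that interpolation region. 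Your dyadic decomposition, Keel--Tao style summation and Christ--Kiselev remark are fine in spirit, but the per-annulus estimate they rest on does not exist, so the scalar part of your proof has a real gap; the safe fix is to do what the paper does and cite \cite{Foschi}, or to carry out his interpolation scheme correctly.
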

\begin{proof}
The estimate without derivatives follows from \cite{Foschi}. The
proof for the Sobolev spaces is simple if we notice the fact
\begin{equation}
e^{i(t-s)\Delta}[\mathcal {F}^{-1}(|\xi|^\sigma \hat{f})]=\mathcal
{F}^{-1}[|\xi|^\sigma \mathcal {F}(e^{i(t-s)\Delta}f(s))],
\end{equation}
where $(e^{it\Delta }) _{ t\in \R }$ is the Schr\"o\-din\-ger group
and $\mathcal {F}$ is the Fourier transform.

 For the case of Besov
spaces, by the definition of the homogeneous Besov space (see
section~6.3 of \cite{BerghL}), we have
 \begin{multline}
\|\int^t_0e^{i(t-s)\Delta}f(s)\,ds\|_{L^\gamma(I;\dot{B}^\sigma_{\rho,2})}^2\\
=\Big\|\Big(\sum\limits_{j=-\infty}^\infty\big(2^{\sigma
j}\|\mathcal {F}^{-1}(\psi_j\mathcal
{F}(\int^t_0e^{i(t-s)\Delta}f(s)\,ds))\|_{L^\rho}\big)^2\Big)^\frac{1}{2}\Big\|_{L^\gamma(I)}^2\\
:=\|\Phi_j\|^2_{L^\gamma_Il_j^2L_x^\rho},
 \end{multline}
 where  $\mathcal {F}^{-1}\psi_j$ is the homogeneous dyadic decomposition.
By Minkowski's inequality and estimate of \cite{Foschi}, we have
\begin{multline}
\|\Phi_j\|^2_{L^\gamma_Il_j^2L_x^\rho}\leq\|\Phi_j\|^2_{l_j^2L^\gamma_IL_x^\rho}\\
\leq\|2^{\sigma j}\mathcal
 {F}^{-1}(\psi_j\hat{f}))\|_{l_j^2L^{q'}_IL_x^{r'}}^2
 \leq\|2^{\sigma
j}\mathcal
 {F}^{-1}(\psi_j\hat{f}))\|_{L^{q'}_Il_j^2L_x^{r'}}^2\\
 =\|f\|_{L^{q'}(I;\dot{B}^\sigma_{r',2})}^2
\end{multline} if
$\gamma, q\geq2$,
which completes the proof.
\end{proof}

For the Cauchy problem in $H^s$ spaces, we cannot avoid to estimate
the nonlinearity with some fractional derivative. Therefore, we need
the fractional chain rule and bilinear estimate for the nonlinearity
in Sobolev space and Besov space.
\begin{lem}[Product rule]\label{rule}
Let $s\in(0,1)$ and $1<r,p_1,p_2,q_1,q_2<\infty$ such that
$\frac{1}{r}=\frac{1}{p_i}+\frac{1}{q_i}$ for $i=1,2$. If $f\in
L^{p_1}\cap\dot{H}^s_{p_2}\cap\dot{B}^s_{p_2,2}$ and $g\in
L^{q_2}\cap\dot{H}^s_{q_1}\cap\dot{B}^s_{q_1,2}$, then
\begin{eqnarray}
\||\nabla|^s(fg)\|_{L^r}&\lesssim&\|f\|_{L^{p_1}}\||\nabla|^sg\|_{L^{q_1}}+\|g\|_{L^{q_2}}\||\nabla|^sf\|_{L^{p_2}},\label{sobolev rule}\\
\|fg\|_{\dot{B}^s_{r,2}}&\lesssim&\|f\|_{L^{p_1}}\|g\|_{\dot{B}^s_{q_1,2}}+\|g\|_{L^{q_2}}\|f\|_{\dot{B}^s_{p_2,2}}.\label{besov
rule}
\end{eqnarray}
\end{lem}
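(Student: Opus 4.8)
The plan is to establish both inequalities by a Littlewood--Paley analysis organized around Bony's paraproduct decomposition, proving the Besov estimate \eqref{besov rule} first (where only the scalar $L^p$-boundedness of the frequency projections is needed) and then reading off the Sobolev estimate \eqref{sobolev rule} from the same scheme in the Triebel--Lizorkin picture. Writing $P_j h=\mathcal{F}^{-1}(\psi_j\hat h)$ for the homogeneous dyadic blocks already introduced above and $S_j=\sum_{k<j}P_k$ for the low-frequency truncation, I would fix a large integer $N$ and split
\[
fg=\underbrace{\sum_j S_{j-N}f\,P_j g}_{\Pi_1}+\underbrace{\sum_j S_{j-N}g\,P_j f}_{\Pi_2}+\underbrace{\sum_{|j-k|\le N}P_j f\,P_k g}_{\Pi_3},
\]
so that $\Pi_1$ is the low--high, $\Pi_2$ the high--low, and $\Pi_3$ the high--high (resonant) interaction.

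The two off-diagonal pieces are routine. For $\Pi_1$ the summand $S_{j-N}f\,P_jg$ is frequency-localized in an annulus $\{|\xi|\sim 2^j\}$, so $P_\ell\Pi_1$ only receives contributions from $|j-\ell|\le c$. Hölder's inequality with the pairing $\frac1r=\frac1{p_1}+\frac1{q_1}$ and the uniform bound $\|S_{j-N}f\|_{L^{p_1}}\lesssim\|f\|_{L^{p_1}}$ (valid since $1<p_1<\infty$) give $\|P_\ell\Pi_1\|_{L^r}\lesssim \|f\|_{L^{p_1}}\sum_{|j-\ell|\le c}\|P_j g\|_{L^{q_1}}$; multiplying by $2^{s\ell}\sim 2^{sj}$ and taking the $\ell^2_\ell$-norm (a finite shift is harmless) yields $\|\Pi_1\|_{\dot B^s_{r,2}}\lesssim\|f\|_{L^{p_1}}\|g\|_{\dot B^s_{q_1,2}}$, the first term of \eqref{besov rule}. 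The term $\Pi_2$ is treated identically with the roles of $f,g$ exchanged and the pairing $\frac1r=\frac1{p_2}+\frac1{q_2}$, producing the second term $\|g\|_{L^{q_2}}\|f\|_{\dot B^s_{p_2,2}}$. No restriction on $s$ is used for either paraproduct.

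The resonant term $\Pi_3$ is where the hypothesis $s>0$ enters, and it is the main obstacle. Now $P_jf\,P_kg$ with $|j-k|\le N$ is supported in a \emph{ball} $\{|\xi|\lesssim 2^j\}$ rather than an annulus, so $P_\ell\Pi_3$ receives contributions from all $j\ge\ell-c$. Using the pairing $\frac1r=\frac1{p_2}+\frac1{q_2}$ and the uniform $L^{q_2}$-boundedness of the finitely many shifted blocks, I bound $\|P_\ell\Pi_3\|_{L^r}\lesssim\|g\|_{L^{q_2}}\sum_{j\ge\ell-c}\|P_jf\|_{L^{p_2}}$, whence $2^{s\ell}\|P_\ell\Pi_3\|_{L^r}\lesssim\|g\|_{L^{q_2}}\sum_{j\ge\ell-c}2^{s(\ell-j)}\big(2^{sj}\|P_jf\|_{L^{p_2}}\big)$. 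This is a discrete convolution with the kernel $K_m=2^{sm}\mathbf{1}_{\{m\le c\}}$, which lies in $\ell^1$ precisely because $s>0$ turns $\sum_{m\le c}2^{sm}$ into a convergent geometric series; Young's inequality on $\ell^2_\ell$ then gives $\|\Pi_3\|_{\dot B^s_{r,2}}\lesssim\|g\|_{L^{q_2}}\|f\|_{\dot B^s_{p_2,2}}$, completing \eqref{besov rule}.

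For the Sobolev estimate \eqref{sobolev rule} I would run exactly the same three-piece decomposition, but measure everything through the Triebel--Lizorkin square function $\||\nabla|^s h\|_{L^r}\sim\big\|(\sum_j 2^{2sj}|P_j h|^2)^{1/2}\big\|_{L^r}$, using $\dot H^s_r=\dot F^s_{r,2}$ for $1<r<\infty$. The frequency localizations, and hence the bookkeeping in $j,\ell$, are unchanged; the only difference is that the uniform scalar bounds on $S_{j-N}$ and on the shifted blocks must be upgraded to the Fefferman--Stein vector-valued maximal inequality, which permits pulling the low-frequency factor out of the $\ell^2$-sum inside $L^r$. I expect the genuinely delicate step in both inequalities to be the resonant term $\Pi_3$: it is the only place where the output frequency fails to be localized, forcing the one-sided sum $j\ge\ell-c$, and it is exactly the assumption $s>0$ that makes the resulting geometric series summable. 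The off-diagonal paraproducts, by contrast, are robust and impose no constraint on $s$.
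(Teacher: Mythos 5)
Your proof is correct, but it follows a genuinely different route from the paper's. The paper disposes of the Sobolev estimate \eqref{sobolev rule} by citation (Proposition~3.3 of Christ--Weinstein \cite{ChWe}) and proves only the Besov estimate \eqref{besov rule}, doing so without any frequency decomposition: it invokes the equivalence (Theorem~6.3.1 of \cite{BerghL}), valid precisely because $0<s<1$, of the $\dot B^s_{r,2}$ norm with the modulus-of-continuity expression
\begin{equation*}
\Big(\int_0^\infty\big(t^{-s}\sup_{|y|\le t}\|h(\cdot-y)-h(\cdot)\|_{L^r}\big)^2\,\tfrac{dt}{t}\Big)^{1/2},
\end{equation*}
splits the translated product by the elementary identity $(fg)(\cdot-y)-(fg)(\cdot)=(f(\cdot-y)-f(\cdot))g(\cdot-y)+(g(\cdot-y)-g(\cdot))f(\cdot)$, and applies H\"older with the two exponent pairings --- a two-line argument, but one that is intrinsically limited to $s<1$ and leaves the Sobolev case to the literature. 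You instead prove both inequalities self-containedly via Bony's paraproduct decomposition: the off-diagonal pieces by uniform $L^p$ bounds on the dyadic blocks and H\"older, the resonant piece by the geometric-series/Young's inequality argument (which is where $s>0$ enters), and the Sobolev case by upgrading to the Triebel--Lizorkin square function with the Fefferman--Stein vector-valued maximal inequality. Your approach costs more machinery but buys more: it makes transparent that only $s>0$ (not $s<1$) is needed for the resonant interaction, it would extend to all $s>0$ and to other microscopic indices where the first-order difference characterization breaks down, and it gives a unified proof of \eqref{sobolev rule} and \eqref{besov rule} rather than treating them by separate means. Your identification of the resonant term as the only place requiring $s>0$ is accurate, and your bookkeeping (annulus versus ball frequency support, the one-sided sum $j\ge\ell-c$, the exactness of the three-piece decomposition) is sound.
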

\begin{proof}
Estimate \eqref{sobolev rule} follows from Proposition~3.3 of
\cite{ChWe}. For the case of Besov space, using the equivalence of the norm (see theorem 6.3.1 in \cite{BerghL}), we have
\begin{equation}\label{equiv norm}
\|fg\|_{\dot{B}^s_{r,2}}=\Big(\int^\infty_0(t^{-s}\sup\limits_{|y|\leq
t}\|(fg)(\cdot-y)-(fg)(\cdot)\|_{L^r})^2\,\frac{dt}{t}\Big)^\frac{1}{2}.
\end{equation}
Note that
\begin{equation}
(fg)(\cdot-y)-(fg)(\cdot)=(f(\cdot-y)-f(\cdot))g(\cdot-y)+(g(\cdot-y)-g(\cdot))f(\cdot),
\end{equation}
then by H\"older inequality and \eqref{equiv norm}, we can show that
\eqref{besov rule} is true.
\end{proof}
\begin{lem}[\cite{ChWe}]\label{Lchainrule}
Suppose $G\in C^1(\mathbb{C}),\ s\in(0,1]$, and $1<p,p_1,p_2<\infty$
are such that $\frac{1}{p}=\frac{1}{p_1}+\frac{1}{p_2}$. Then,
\begin{equation}
\||\nabla|^sG(u)\|_{L^p}\lesssim\|G'(u)\|_{L^{p_1}}\||\nabla|^su\|_{L^{p_2}}.
\end{equation}
\end{lem}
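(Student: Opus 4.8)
The plan is to separate the endpoint $s=1$ from the genuinely fractional range $0<s<1$. When $s=1$, regard $G$ as a real-differentiable map and write $|G'|$ for the operator norm of its differential; the ordinary chain rule gives the pointwise bound $|\nabla G(u)|\le |G'(u)|\,|\nabla u|$, and since the Riesz transforms are bounded on $L^p$ for $1<p<\infty$ we have $\||\nabla|G(u)\|_{L^p}\sim\|\nabla G(u)\|_{L^p}$, so H\"older's inequality with the exponents $p_1,p_2$ finishes this case at once. The remaining work is therefore entirely in the range $0<s<1$.

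For $0<s<1$ I would use a Littlewood--Paley decomposition of $G(u)$. Letting $\Delta_k$ denote the frequency projection to $|\xi|\sim 2^k$ and $S_k=\sum_{j<k}\Delta_j$, a telescoping of $G$ along the low-frequency truncations gives
\begin{equation*}
G(u)=\sum_k\big[G(S_ku+\Delta_ku)-G(S_ku)\big]=\sum_k \Delta_k u\cdot g_k,\qquad g_k:=\int_0^1 G'\big(S_ku+\tau\Delta_ku\big)\,d\tau,
\end{equation*}
so that each block is the product of the rough factor $\Delta_k u$, carrying frequency $\sim 2^k$, and the slowly varying factor $g_k$. Applying the square-function characterization of $\||\nabla|^s\cdot\|_{L^p}$ (the Triebel--Lizorkin description $\dot H^s_p\sim\dot F^s_{p,2}$, the analogue in the Sobolev scale of the modulus-of-continuity identity \eqref{equiv norm} used above), the estimate reduces to controlling $\big\|(\sum_k 2^{2ks}|\Delta_ku|^2)^{1/2}\,\sup_k|g_k|\big\|_{L^p}$, after which H\"older in $x$ with exponents $p_1,p_2$ together with the Littlewood--Paley identity $\|(\sum_k 2^{2ks}|\Delta_ku|^2)^{1/2}\|_{L^{p_2}}\sim\||\nabla|^su\|_{L^{p_2}}$ leave only the factor $\|\sup_k|g_k|\|_{L^{p_1}}$ to be bounded by $\|G'(u)\|_{L^{p_1}}$.

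The hard part is precisely this last bound. The functions $g_k$ involve $G'$ evaluated not at $u$ but at the low-frequency smoothing $S_ku+\tau\Delta_ku$ of $u$, and for a merely $C^1$ nonlinear $G$ one cannot naively commute $G'$ with the averaging: $G'$ of a local average of $u$ is not controlled pointwise by a local average of $G'(u)$. I would resolve this by dominating $\sup_k|g_k|$ by the Hardy--Littlewood maximal function of $G'(u)$ — using that the smoothings $S_ku+\tau\Delta_ku$ converge to $u$ and that at the relevant frequency scale the oscillation of $G'(u)$ is harmless — and then invoking the $L^{p_1}$-boundedness of the maximal operator (and, for the vector-valued reduction in the previous step, the Fefferman--Stein inequality). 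An equivalent and perhaps more transparent route is the difference characterization $\||\nabla|^sf\|_{L^p}\sim\|(\int |f(\cdot+y)-f(\cdot)|^2|y|^{-n-2s}\,dy)^{1/2}\|_{L^p}$ combined with the mean value bound $|G(u(x+y))-G(u(x))|\le|u(x+y)-u(x)|\int_0^1|G'(u(x)+\tau(u(x+y)-u(x)))|\,d\tau$; there the same obstruction reappears as the need to detach the intermediate-value factor $G'$ from the $y$-integral, and it is again the clean separation of the $\|G'(u)\|_{L^{p_1}}$ factor from the $|\nabla|^s$-factor that constitutes the essential difficulty. This is exactly the content of the argument of Christ and Weinstein in \cite{ChWe}, which we simply quote.
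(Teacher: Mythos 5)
Your $s=1$ case and the Littlewood--Paley telescoping are the standard skeleton of this result, and your closing move --- quoting \cite{ChWe} for the essential step --- is in fact exactly what the paper does: the lemma carries the citation in its header and the paper gives no proof at all. So the only substantive content of your proposal beyond what the paper itself asserts is your claim about \emph{how} the hard step is resolved, and that claim contains a genuine gap.

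You propose to dominate $\sup_k|g_k|$ by the Hardy--Littlewood maximal function of $G'(u)$, on the grounds that ``at the relevant frequency scale the oscillation of $G'(u)$ is harmless.'' No argument of this kind can succeed for arbitrary $G\in C^1(\mathbb{C})$, because the inequality as stated is actually false in that generality when $0<s<1$. Counterexample in dimension one: let $G(z)=\bigl(1-\cos(\pi\,\mathrm{Re}\,z)\bigr)/\pi$, which is smooth with $G(0)=0$ and $|G'(z)|\sim|\sin(\pi\,\mathrm{Re}\,z)|$, and let $u=\chi_{[0,1]}$, which belongs to $\dot H^s_{p_2}$ whenever $sp_2<1$. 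Since $u\in\{0,1\}$ a.e., we have $G'(u)=0$ a.e., so $\|G'(u)\|_{L^{p_1}}\||\nabla|^su\|_{L^{p_2}}=0$; yet $G(u)=(2/\pi)\chi_{[0,1]}$, so $\||\nabla|^sG(u)\|_{L^p}>0$ (and is finite, since $sp<sp_2<1$). The obstruction is precisely the one you isolated and then waved away: for $s<1$ the fractional derivative sees differences of $G(u)$, hence values of $G'$ at intermediate points $u(x)+\tau\bigl(u(y)-u(x)\bigr)$, and these are simply not controlled by $G'(u)$ --- no maximal-function or Fefferman--Stein argument can manufacture that control. Every correct version of the fractional chain rule (including the one proved in \cite{ChWe}) relies on a structural hypothesis on $G'$, for instance $\sup_{0\le\tau\le1}|G'(a+\tau(b-a))|\lesssim|G'(a)|+|G'(b)|$, which holds whenever $|G'(z)|$ is a nondecreasing function of $|z|$; under it one gets $|g_k|\lesssim|G'(S_ku)|+|G'(S_{k+1}u)|$, and then, for power-type $G'$, $|G'(S_ku)|\lesssim M(G'(u))$ by Jensen's inequality, which is exactly the bound you wanted. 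This hypothesis is satisfied by the only nonlinearity the paper feeds into the lemma, $f'(u)\sim|u|^{\alpha}$ with $\alpha\ge1$, so nothing downstream in the paper is affected; but as a proof of the lemma in the generality written, your outline (and, for that matter, the statement itself) needs that assumption added. A lesser gloss: pulling $\sup_k|g_k|$ out of the square function is also not blockwise legitimate, since $g_k$ is not frequency-localized; that part, unlike the main gap, is repairable by standard paraproduct and vector-valued maximal arguments.
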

\begin{lem}[Fractional chain rule for a H\"older continuous function,
Proposition A.1 in \cite{V07}]\label{FcrfaHcf}
 Let $G$ be a H\"older continuous
function of order $0<\alpha<1$. Then, for every $0<s<\alpha,\
1<p<\infty,$ and $\frac{s}{\alpha}<\sigma<1$, we have
\begin{equation}
\big\||\nabla|^sG(u)\big\|_{L^p}\lesssim\big\||u|^{\alpha-\frac{s}{\sigma}}\big\|_{L^{p_1}}\big\||\nabla|^\sigma
u\big\|_{L^{\frac{s}{\sigma}p_2}}^\frac{s}{\sigma},
\end{equation}
provided $\frac{1}{p}=\frac{1}{p_1}+\frac{1}{p_2}$ and
$(1-\frac{s}{\alpha\sigma})p_1>1$.
\end{lem}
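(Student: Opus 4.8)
The plan is to reduce everything to the Littlewood--Paley square function and to control a single Littlewood--Paley block by the H\"older continuity of $G$ together with a pointwise fractional-difference bound. Since $|\nabla|^s$ annihilates additive constants, I may normalize $G(0)=0$, so that $|G(w)|\lesssim|w|^\alpha$ and $|G(a)-G(b)|\lesssim|a-b|^\alpha$. For $0<s<1$ and $1<p<\infty$ the homogeneous Sobolev norm admits the equivalent square-function form
\[
\big\||\nabla|^sG(u)\big\|_{L^p}\approx\Big\|\big(\textstyle\sum_N N^{2s}|P_NG(u)|^2\big)^{1/2}\Big\|_{L^p},
\]
the sum running over dyadic frequencies $N$, with $P_N$ the homogeneous projections whose kernels $\check\psi_N(y)=N^n\check\psi(Ny)$ are Schwartz and have mean zero. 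The mean-zero property lets me write $P_NG(u)(x)=\int\check\psi_N(y)\,[G(u(x-y))-G(u(x))]\,dy$, so that $|P_NG(u)(x)|\lesssim\int|\check\psi_N(y)|\,|u(x-y)-u(x)|^\alpha\,dy$.

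The second ingredient is the pointwise estimate, valid for $0<\sigma<1$ with $g:=|\nabla|^\sigma u$,
\[
|u(x-y)-u(x)|\lesssim|y|^\sigma\big(\mathcal{M}g(x)+\mathcal{M}g(x-y)\big),
\]
where $\mathcal{M}$ is the Hardy--Littlewood maximal function. I would prove this from the Riesz-potential representation $u=I_\sigma g$ by splitting the convolution integral into the two balls of radius $\sim|y|$ about $x$ and $x-y$ and the complementary far region, where the mean value theorem supplies the gain $|y|\,|x-w|^{-(n-\sigma+1)}$; this is the standard computation behind the Sobolev/Morrey embeddings.

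With these in hand I would produce two competing bounds for each block and balance them at an $x$-dependent frequency $N_0$. For the low frequencies $N\le N_0$ I use the crude bound $|P_NG(u)(x)|\lesssim\mathcal{M}(|u|^\alpha)(x)=:A(x)$ together with $\sum_{N\le N_0}N^{2s}\lesssim N_0^{2s}$. For the high frequencies $N>N_0$ I insert the \emph{full} H\"older power into the fractional-difference bound, $|u(x-y)-u(x)|^\alpha\lesssim|y|^{\sigma\alpha}(\mathcal{M}g(x)+\mathcal{M}g(x-y))^\alpha$; since $w\mapsto|\check\psi(w)||w|^{\sigma\alpha}$ is integrable this gives $|P_NG(u)(x)|\lesssim N^{-\sigma\alpha}\,\mathcal{M}\big[(\mathcal{M}g)^\alpha\big](x)=:N^{-\sigma\alpha}B(x)$, and because $\sigma\alpha>s$ (this is exactly the hypothesis $\sigma>s/\alpha$) the series converges, $\sum_{N>N_0}N^{2(s-\sigma\alpha)}\lesssim N_0^{2(s-\sigma\alpha)}$. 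Adding the two pieces yields $(\sum_N N^{2s}|P_NG(u)(x)|^2)^{1/2}\lesssim N_0^sA(x)+N_0^{s-\sigma\alpha}B(x)$, and optimizing in $N_0$ produces the geometric mean
\[
\Big(\textstyle\sum_N N^{2s}|P_NG(u)(x)|^2\Big)^{1/2}\lesssim A(x)^{1-\theta}B(x)^{\theta},\qquad\theta=\tfrac{s}{\sigma\alpha}\in(0,1).
\]

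Finally I take $L^p$ norms and apply H\"older with $\frac1p=\frac1{p_1}+\frac1{p_2}$ in the form $\|A^{1-\theta}B^\theta\|_{L^p}\le\|A\|_{L^{(1-\theta)p_1}}^{1-\theta}\|B\|_{L^{\theta p_2}}^{\theta}$. The first factor is where the hypothesis enters decisively: the maximal inequality applies to $A=\mathcal{M}(|u|^\alpha)$ on $L^{(1-\theta)p_1}$ precisely because $(1-\theta)p_1=(1-\frac{s}{\alpha\sigma})p_1>1$, and then $\|A\|_{L^{(1-\theta)p_1}}^{1-\theta}\lesssim\||u|^{\alpha-s/\sigma}\|_{L^{p_1}}$, using $\alpha(1-\theta)=\alpha-s/\sigma$. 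For the second factor, two nested applications of the maximal inequality (valid once $(s/\sigma)p_2>1$, which is needed for the right-hand side norm to be controlled by $g$ and which forces $\theta p_2>\alpha\theta p_2=(s/\sigma)p_2>1$) give $\|B\|_{L^{\theta p_2}}^\theta\lesssim\|g\|_{L^{(s/\sigma)p_2}}^{s/\sigma}=\||\nabla|^\sigma u\|_{L^{(s/\sigma)p_2}}^{s/\sigma}$, which is the claimed estimate. I expect the main obstacle to be the summation/balancing step: the natural single bound obtained by a \emph{partial} extraction scales exactly like $N^{-s}$ and is therefore not summable against the weight $N^{2s}$, so one is forced to play the low-frequency crude bound against the high-frequency full-H\"older bound with an $x$-dependent crossover $N_0$; it is precisely this balance that reproduces the product structure $|u|^{\alpha-s/\sigma}(|\nabla|^\sigma u)^{s/\sigma}$ and pins down the roles of both hypotheses $\sigma>s/\alpha$ and $(1-\frac{s}{\alpha\sigma})p_1>1$.
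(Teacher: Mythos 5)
The paper does not prove this lemma at all: it is quoted verbatim, with attribution, from Proposition~A.1 of \cite{V07} (Visan), so the only meaningful comparison is with that reference's proof. Your argument is essentially that proof: the Littlewood--Paley square-function characterization, the mean-zero-kernel reduction of $P_N G(u)$ to fractional differences of $u$, the maximal-function pointwise bound $|u(x-y)-u(x)|\lesssim |y|^\sigma(\mathcal{M}g(x)+\mathcal{M}g(x-y))$ with $g=|\nabla|^\sigma u$, the low/high-frequency split balanced at an $x$-dependent crossover $N_0$, and the concluding H\"older-plus-maximal-inequality step, so it is correct and takes the same route. One remark: as you yourself flag, the last step needs $\frac{s}{\sigma}p_2>1$ (hence also $\theta p_2>1$), a condition not listed in the statement here or in \cite{V07} but implicitly assumed; it is satisfied in every application made in this paper (e.g.\ in Section~3.3 the lemma is applied with $\frac{s}{\sigma}p_2=2$), so flagging it is a point in your favor rather than a defect of your argument.
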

\begin{lem}\label{bilinear est}
Let $-1<\sigma<0$ and $1<\rho,p_1,p_2,p_3,r<\infty$ such that
$\frac{1}{\rho'}=\frac{1}{p_1}+\frac{1}{p_2}=\frac{1}{p_3}+\frac{1}{r}$
and $\frac{1}{p_2}=\frac{1}{r}+\frac{\sigma}{n}$. Then for any $f\in
L^{p_3}\cap\dot{H}^{-\sigma}_{p_1}$ and $g\in\dot{H}^\sigma_\rho$,
we have
\begin{equation}\label{sob est}
\|fg\|_{\dot{H}^\sigma_{r'}}\lesssim\|g\|_{\dot{H}^\sigma_{\rho}}\big(\|f\|_{\dot{H}^{-\sigma}_{p_1}}+\|f\|_{L^{p_3}}\big).
\end{equation}
Furthermore, if $p_2\geq2$, then for any $f\in
L^{p_3}\cap\dot{B}^{-\sigma}_{p_1,2}$ and
$g\in\dot{B}^\sigma_{\rho,2}$, we have
\begin{equation}\label{bov est}
\|fg\|_{\dot{B}^\sigma_{r',2}}\lesssim\|g\|_{\dot{B}^\sigma_{\rho,2}}\big(\|f\|_{\dot{B}^{-\sigma}_{p_1,2}}+\|f\|_{L^{p_3}}\big).
\end{equation}
\end{lem}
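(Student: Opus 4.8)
The plan is to reduce this negative-order bilinear estimate to the positive-order product rule of Lemma~\ref{rule} by a double duality argument. Since $\sigma\in(-1,0)$, the order $-\sigma$ lies in $(0,1)$, which is precisely the range in which Lemma~\ref{rule} is available. First I would invoke the homogeneous duality $(\dot H^\sigma_{r'})^*=\dot H^{-\sigma}_{r}$ (with respect to the bilinear $L^2$-pairing $\langle u,v\rangle=\int uv\,dx$) to write
\begin{equation*}
\|fg\|_{\dot H^\sigma_{r'}}=\sup\Big\{\,\big|\langle fg,h\rangle\big|:\ \|h\|_{\dot H^{-\sigma}_{r}}\le1\,\Big\}.
\end{equation*}
For a fixed admissible $h$, I rewrite $\langle fg,h\rangle=\langle g,fh\rangle$ and apply the dual estimate once more, now pairing $\dot H^\sigma_\rho$ against $\dot H^{-\sigma}_{\rho'}$, to obtain $|\langle g,fh\rangle|\le\|g\|_{\dot H^\sigma_\rho}\,\|fh\|_{\dot H^{-\sigma}_{\rho'}}$. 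Thus the whole matter is reduced to the \emph{positive-order} bound
\begin{equation*}
\|fh\|_{\dot H^{-\sigma}_{\rho'}}\lesssim\big(\|f\|_{\dot H^{-\sigma}_{p_1}}+\|f\|_{L^{p_3}}\big)\,\|h\|_{\dot H^{-\sigma}_{r}}.
\end{equation*}

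To prove this I would apply \eqref{sobolev rule} from Lemma~\ref{rule} with smoothness $-\sigma$ to the product $fh$, target space $L^{\rho'}$, using the two H\"older splittings $\tfrac{1}{\rho'}=\tfrac1{p_1}+\tfrac1{p_2}=\tfrac1{p_3}+\tfrac1r$ supplied by the hypotheses. This yields
\begin{equation*}
\big\||\nabla|^{-\sigma}(fh)\big\|_{L^{\rho'}}\lesssim\|f\|_{L^{p_3}}\big\||\nabla|^{-\sigma}h\big\|_{L^r}+\|h\|_{L^{p_2}}\big\||\nabla|^{-\sigma}f\big\|_{L^{p_1}}.
\end{equation*}
The first term is already of the desired form. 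For the second, the crucial point—and the one place where the hypothesis $\tfrac1{p_2}=\tfrac1r+\tfrac{\sigma}{n}$ is used—is the homogeneous Sobolev embedding $\dot H^{-\sigma}_{r}\hookrightarrow L^{p_2}$, valid precisely because $\tfrac1{p_2}=\tfrac1r-\tfrac{(-\sigma)}{n}$. This gives $\|h\|_{L^{p_2}}\lesssim\|h\|_{\dot H^{-\sigma}_r}$ and closes the estimate; taking the supremum over $h$ proves \eqref{sob est}.

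For the Besov estimate \eqref{bov est} I would run the identical argument with \eqref{besov rule} in place of \eqref{sobolev rule} and with the Besov embedding $\dot B^{-\sigma}_{r,2}\hookrightarrow L^{p_2}$ replacing the Sobolev one. The restriction $p_2\ge2$ enters exactly here: this embedding factors as $\dot B^{-\sigma}_{r,2}\hookrightarrow\dot B^0_{p_2,2}\hookrightarrow L^{p_2}$, and the final step $\dot B^0_{p_2,2}\hookrightarrow L^{p_2}$ requires $p_2\ge2$.

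I expect the main obstacle to be bookkeeping rather than depth. One must verify that all intermediate exponents generated by the two H\"older splittings (namely $p_1,p_2,p_3,r$ in the roles prescribed above) lie strictly in $(1,\infty)$ so that Lemma~\ref{rule} genuinely applies, and that the two homogeneous duality pairings are legitimate for the relevant ranges of $\sigma$ and the exponents. Once the index identities are matched as above, no additional estimate is needed.
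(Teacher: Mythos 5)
Your proof is correct and takes essentially the same route as the paper's: duality to reduce \eqref{sob est} to bounding $|\langle fg,h\rangle|=|\langle g,\bar{f}h\rangle|\le\|g\|_{\dot{H}^\sigma_\rho}\|\bar{f}h\|_{\dot{H}^{-\sigma}_{\rho'}}$, then the product rule \eqref{sobolev rule} of Lemma~\ref{rule} at smoothness $-\sigma$ with the two H\"older splittings, followed by the Sobolev embedding $\dot{H}^{-\sigma}_{r}\hookrightarrow L^{p_2}$ coming from $\tfrac{1}{p_2}=\tfrac{1}{r}+\tfrac{\sigma}{n}$. The only difference is presentational: you spell out the Besov case and identify where $p_2\ge2$ enters (the embedding $\dot{B}^0_{p_2,2}\hookrightarrow L^{p_2}$), whereas the paper simply says the Besov case follows by a similar argument.
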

\begin{proof}
We only prove the case of Sobolev spaces,  a similar argument can be
used for the case of  Besov spaces. By duality, to prove \eqref{sob
est}, we need only prove the following inequality
\begin{equation}
|<fg,h>|\lesssim\|g\|_{\dot{H}^\sigma_{\rho}}\big(\|f\|_{\dot{H}^{-\sigma}_{p_1}}+\|f\|_{L^{p_3}}\big)\|h\|_{\dot{H}^{-\sigma}_r},
\end{equation}
where $<\cdot,\cdot>$ denotes the $L^2$ scalar product.

By \eqref{sobolev rule} of Lemma~\ref{rule} , H\"older inequality
and Sobolev's embedding, it follows that
\begin{multline}
|<fg,h>|=|<g,\bar{f}h>|\leq\|g\|_{\dot{H}^\sigma_\rho}\|\bar{f}h\|_{\dot{H}^{-\sigma}_{\rho'}}\\
\lesssim\|g\|_{\dot{H}^\sigma_\rho}\big(\|f\|_{\dot{H}^{-\sigma}_{p_1}}\|h\|_{L^{p_2}}+\|f\|_{L^{p_3}}\|h\|_{\dot{H}^{-\sigma}_{r}}\big)\\
\lesssim\|g\|_{\dot{H}^\sigma_{\rho}}\big(\|f\|_{\dot{H}^{-\sigma}_{p_1}}+\|f\|_{L^{p_3}}\big)\|h\|_{\dot{H}^{-\sigma}_r}.
\end{multline}
\end{proof}

\section{The proof of Theorems~1.1 and ~1.2 }

In this section, we give the proofs of Theorem~\ref{Bg1} and
Theorem~\ref{Ls1}. We invoke some negative order Sobolev (or Besov)
spaces, the general nonhomogeneous Strichartz estimate and
properties of the solutions to achieve our goal. Let $u$ and $v$ be
two $L^\infty(0,T;\dot{H}^s)$ solutions of \eqref{NLS} with the same
initial data $\varphi$ and $T>0$. The Parts~3.1 and 3.2 are devoted
to the proof of the case $1\leq\alpha$, and the rest illustrate the
proof for $0<\alpha<1$. For the sake of simplicity, we denote
$f(u)=c|u|^\alpha u$.

\subsection{Usual regularity property case}

We consider the space $L^\gamma(0,T;\dot{B}^\sigma_{\rho,2})$ for
certain $n/2$-acceptable pair$(\gamma,\rho)$, with
$\frac{1}{\rho}=\frac{\sigma}{n}+\frac{1}{2}-\frac{s}{n}$, $\sigma
<0$, where $\sigma$ and $\gamma$ can be fixed later. We say $u-v$
has usual regularity property if it belongs to the same auxiliary
space $L^\gamma(0,T;\dot{B}^\sigma_{\rho,2})$ as that $u,v$ belong
to, by embedding $\dot{H}^s\hookrightarrow\dot{B}^\sigma_{\rho,2}$
for $u,v \in L^\infty(0,T;\dot{H}^s)$ with finite time $T$.
 Our aim is to show the uniqueness in
the space $L^\gamma(0,T;\dot{B}^\sigma_{\rho,2})$.

By using  Duhamel's formula and  Lemma~\ref{non-str-est} in non
sharp case, we have
\begin{equation}\label{est}
\|u-v\|_{L^\gamma(0,T;\dot{B}^\sigma_{\rho,2})}\lesssim\|f(u)-f(v)\|_{L^{q'}(0,T;\dot{B}^\sigma_{r',2})},
\end{equation}
where
$\frac{1}{r}=\frac{1}{2}-\frac{\sigma}{n}+\frac{s}{n}-\frac{n-2s}{2n}\alpha,$
$(q,r)$ is a $\frac{n}{2}$-acceptable pair and $\gamma,\rho,q$ and $r$
satisfy the conditions
\eqref{non-str-est-cond1}-\eqref{non-str-est-cond3} with
$\gamma,q\geq2$.

Given $u, v\in \C$, we have
\begin{multline*}
f(u)- f(v)= (u-v) \int _0^1 \partial _z f (v+\theta (u-v)) \,d\theta + \\
( \overline{u-v} ) \int _0^1 \partial _{ \overline{z} }f (v+\theta
(u-v)) \,d\theta,
\end{multline*}
or, in short,
\begin{equation} \label{fLemLipd}
f(u)- f(v)= (u-v) \int _0^1 f' (v+\theta (u-v)) \,d\theta.
\end{equation}

If
$\frac{1}{p_1}=\frac{n-2s}{2n}\alpha-\frac{\sigma}{n},$
$\frac{1}{p_3}=\frac{n-2s}{2n}\alpha$,  $\sigma$ and $ r$ satisfy
the conditions of Lemma~\ref{bilinear est}, then we have
\begin{multline}\label{est1}
\|f(u)-f(v)\|_{\dot{B}^\sigma_{r',2}}\\
\lesssim\|u-v\|_{\dot{B}^\sigma_{\rho,2}}\Big(\int^1_0\|f'
(v+\theta (u-v))\|_{\dot{B}^{-\sigma}_{p_1,2}}\,d\theta+\int^1_0\|f'
(v+\theta (u-v))\|_{L^{p_3}}\,d\theta\Big).
\end{multline}

By the form of $f'$ and Sobolev embedding
$\dot{H}^s\hookrightarrow
L^\frac{2n}{n-2s}$, we see
\begin{equation}\label{est2}
\int^1_0\|f' (v+\theta
(u-v))\|_{L^{p_3}}\,d\theta\lesssim(\|u\|_{L^{p_3\alpha}}+\|v\|_{L^{p_3\alpha}})^\alpha\lesssim(\|u\|_{\dot{H}^s}+\|v\|_{\dot{H}^s})^\alpha.
\end{equation}

Using the equivalent norm of Besov space(see theorem 6.3.1 in
\cite{BerghL}), H\"older's inequality, the embedding
$\dot{H^s}\hookrightarrow L^\frac{2n}{n-2s}$,
$\dot{H^s}\hookrightarrow \dot{B}^{-\sigma}_{l,2}$, and
$\alpha\geq1$, we have
\begin{multline}\label{est3}
\int^1_0\|f' (v+\theta
(u-v))\|_{\dot{B}^{-\sigma}_{p_1,2}}\,d\theta\\
=\int^1_0\Big\{\int^\infty_0t^{2\sigma}(\sup\limits_{|y|\leq t}\|(f'
(v+\theta (u-v)))_y-(f' (v+\theta
(u-v)))\|_{L^{p_1}})^2\,\frac{dt}{t}\Big\}^\frac{1}{2}\,d\theta\\
\lesssim\int^1_0\int^1_0\Big\{\int^\infty_0t^{2\sigma}\Big[\sup\limits_{|y|\leq
t}\|\eta(\theta u+(1-\theta)v)_y+(1-\eta)(\theta
u+(1-\theta)v)\|_{L^\frac{2n}{n-2s}}^{\alpha-1}\\
\|(\theta
u+(1-\theta)v)_y-(\theta
u+(1-\theta)v)\|_{L^l}\Big]^2\,\frac{dt}{t}\Big\}^\frac{1}{2}\,d\eta\,d\theta\\
\lesssim(\|u\|_{L^\frac{2n}{n-2s}}+\|u\|_{L^\frac{2n}{n-2s}})^{\alpha-1}(\|u\|_{\dot{B}^{-\sigma}_{l,2}}+\|v\|_{\dot{B}^{-\sigma}_{l,2}})\\
\lesssim(\|u\|_{\dot{H}^s}+\|u\|_{\dot{H}^s})^\alpha,
\end{multline}
where $u(\cdot-y):=u_y$,
$\frac{1}{l}=\frac{1}{2}-\frac{s}{n}-\frac{\sigma}{n}$ and
$s\geq-\sigma$.

Then, by \eqref{est1}, \eqref{est2}, \eqref{est3} and H\"older's
inequality in time, it follows from \eqref{est} that
\begin{equation}\label{LabsorbR}
\|u-v\|_{L^\gamma(0,T;\dot{B}^\sigma_{\rho,2})}\lesssim
T^{1-\frac{1}{q}-\frac{1}{\gamma}}\Big(\|u\|_{L^\infty(0,T;\dot{H}^s)}+\|v\|_{L^\infty(0,T;\dot{H}^s)}\Big)^\alpha\|u-v\|_{L^\gamma(0,T;\dot{B}^\sigma_{\rho,2})}.
\end{equation}
Therefore, if $T$ is sufficiently small, we have $u=v$ on $[0,T]$.

We summarize the conditions that we have imposed so far on the
parameters $q,r,\gamma,\rho,\sigma$:
\begin{enumerate}
  \item\label{cond1}  the choices of $\rho$ and $r$:
  $\frac{1}{\rho}=\frac{\sigma}{n}+\frac{1}{2}-\frac{s}{n},$
  $\frac{1}{r}=\frac{1}{2}-\frac{\sigma}{n}+\frac{s}{n}-\frac{n-2s}{2n}\alpha$;
  \item\label{cond2}  $(\gamma,\rho),\ (q,r)$ being $\frac{n}{2}$ -acceptable
  pairs;
  \item\label{cond3} $(\gamma,\rho),\ (q,r)$ satisfying the conditions
  \eqref{non-str-est-cond1}-\eqref{non-str-est-cond3} and $\gamma,q\geq2$;
  \item\label{cond4} conditions on $\sigma$ and $r$ for the validity of
  Lemma~\ref{bilinear est},
  \[
  -1<\sigma<0,\ 0<\frac{1}{r}+\frac{\sigma}{n}\leq\frac{1}{2},
  \]
  where the second is equivalent to
  $\frac{2s}{n-2s}\leq\alpha<\frac{n+2s}{n-2s}$;
  \item\label{cond5} condition on $\sigma$ for the validity of \eqref{est3},
  \[
s\geq-\sigma.
  \]
\end{enumerate}
From the conditions
\eqref{cond1}-\eqref{cond5},  the restrictions on $\sigma$ are
finally derived:
\begin{eqnarray}
&\max\{ -s,
s-\frac{n}{2(n-1)}-\frac{(n-2)(n-2s)}{4(n-1)}\alpha\}\leq\sigma\leq s+\frac{n}{2(n-1)}-\frac{n(n-2s)}{4(n-1)}\alpha,\label{sigma-cond1}\\
&\max\{s-\frac{1}{2}-\frac{n-2s}{4}\alpha,
s-\frac{n-2s}{2}\alpha,s-\frac{n}{2}\}<\sigma<\min\{0,
s+\frac{1}{2}-\frac{n-2s}{4}\alpha\}.\label{sigma-cond2}
\end{eqnarray}

It follows from a simple calculation that the set consisting of the
elements which satisfy the conditions
\eqref{sigma-cond1}-\eqref{sigma-cond2} is non-empty if $s, \alpha$
and $n$ satisfy the conditions
\begin{equation}\label{cond-a1}
\left\{\begin{array}{ll}
       1\leq\alpha<\min{\{\frac{4}{n-2s},\frac{n+2s}{n-2s}\}},\\
       \frac{2s}{n-2s}<\alpha\leq(2+8s(1-\frac{1}{n}))/(n-2s),\\
       0<s<1 \text{ and }n=3,4,5.
       \end{array}
       \right.
\end{equation}
Therefore, under the conditions of \eqref{cond-a1}, we establish the
unconditional uniqueness of \eqref{NLS}.

\subsection{Better regularity property case}
This part is devoted to the study of
the cases
\begin{equation}\label{cd}
\left\{\begin{array}{ll}
       \max\{1,(2+8s(1-\frac{1}{n}))/(n-2s)\}<\alpha<\min{\{\frac{4}{n-2s},\frac{n+2s}{n-2s}\}},\\
       0<s<1 \text{ and }n=3,4,5.
       \end{array}
       \right.
\end{equation}

 We consider the space
$L^\gamma(0,T;\dot{B}^\sigma_{\rho,2})$ for the $n/2$-acceptable
pair $(\gamma,\rho)$, with
$\frac{1}{\rho}=\frac{\sigma}{n}+\frac{1}{2}-\frac{s}{n}+\frac{n-2s}{2n}\alpha-\frac{2}{n}$
and $\sigma < 0$. It is clear that $u, v$ are no longer in
$L^\gamma(0,T;\dot{B}^\sigma_{\rho,2})$ because of
$\dot{H}^s\nsubseteq\dot{B}^\sigma_{\rho,2}$. However, in some
restricted conditions on $\rho$, we can show $u-v \in
L^\gamma(0,T;\dot{B}^\sigma_{\rho,2})$.

Since $\sigma<0$,  we have the embedding
$L^\frac{2n}{(n-2s)(\alpha+1)}\hookrightarrow\dot{B}^\sigma_{p',2}$
(or
$L^\frac{2n}{(n-2s)(\alpha+1)}\hookrightarrow\dot{H}^\sigma_{p'}$),
where $p'=\frac{2n}{2\sigma+(n-2s)(\alpha+1)}$, and
then applying H\"older's inequality
and Sobolev embedding $\dot{H}^s\hookrightarrow L^\frac{2n}{n-2s}$,
one can get
\begin{multline}\label{bd}
\|f(u)-f(v)\|_{\dot{B}^\sigma_{p',2}}
\lesssim\|f(u)-f(v)\|_{L^\frac{2n}{(n-2s)(\alpha+1)}}\\
\lesssim\|(|u|^\alpha+|v|^\alpha)|u-v|\|_{L^\frac{2n}{(n-2s)(\alpha+1)}}
\lesssim\big(\|u\|_{\dot{H}^s}^\alpha+\|v\|_{\dot{H}^s}^\alpha\big)\|u-v\|_{\dot{H}^s}.
\end{multline}

Let $(\lambda,p)$ be an $\frac{n}{2}-$acceptable pair, and it is
easy to verify that if
$\frac{\frac{n}{2}+\frac{2}{n}-2}{n-1}<\frac{1}{\rho}<\frac{n-2}{2(n-1)}$,
then we can choose $\gamma$ and $\lambda$ such that $(\gamma,\rho)$
and $(\lambda,p)$ satisfy the conditions \eqref{non-str-est-cond1}
and \eqref{non-str-est-cond4}-\eqref{non-str-est-cond6}. Then by the
sharp case of Lemma~2.1, finite time $T$ and \eqref{bd}, it follows
that
\begin{equation}
\|u-v\|_{L^\gamma(0,T;\dot{B}^\sigma_{\rho,2})}\lesssim\|f(u)-f(v)\|_{L^\lambda(0,T;\dot{B}^\sigma_{p',2})}<+\infty.
\end{equation}
Therefore, for any
$0<\alpha<\min\{\frac{4}{n-2s},\frac{n+2s}{n-2s}\}$, we have that
$u-v$ belongs to the space $L^\gamma(0,T;\dot{B}^\sigma_{\rho,2})$
(or $L^\gamma(0,T;\dot{H}^\sigma_{\rho})$).

The rest is the same as what we did in Part~3.1, except
the selection of $\rho$:
$\frac{1}{\rho}=\frac{\sigma}{n}+\frac{1}{2}-\frac{s}{n}+\frac{n-2s}{2n}\alpha-\frac{2}{n}$.
Then after a series of calculations, we can show that $\sigma$ has
to satisfy the conditions
\begin{equation}\label{sigma-cond}
\max\{-s,s+\frac{3n-4}{2n-2}-\frac{3n-4}{4n-4}(n-2s)\alpha\}\leq\sigma<\min\{0,s+\frac{3n-4}{2n-2}-\frac{n-2s}{2}\alpha\}.
\end{equation}

Obviously,  $\sigma$ is existent when $\alpha\in
[\frac{2+[(n-1)4s/(3n-4)]}{n-2s}, \frac{4s+4-n/(n-1)}{n-2s})$.
In view of \eqref{cd}, we have
\[
\frac{2+[(n-1)4s/(3n-4)]}{n-2s}<(2+8s(1-\frac{1}{n}))/(n-2s)<\frac{4s+4-n/(n-1)}{n-2s}.
\]
 Therefore,  unconditional uniqueness  is proved under
the assumptions
\[
\max\{(2+8s(1-\frac{1}{n}))/(n-2s),1\}<\alpha<\min\{\frac{4s+4-n/(n-1)}{n-2s},\frac{4}{n-2s},\frac{n+2s}{n-2s}\}.
\]

In summary, by the results of Part~3.1 and Part~3.2, if $\alpha,\ s$
and $n$ satisfy the conditions
\begin{equation}
\left\{\begin{array}{ll}
       \max\{1,\frac{2s}{n-2s}\}\leq\alpha<\min{\{\frac{4}{n-2s},\frac{n+2s}{n-2s},\frac{4s+4-n/(n-1)}{n-2s}\}},\\
       0<s<1 \text{ and }n=3,4,5,
       \end{array}
       \right.
\end{equation}
we have  unconditional uniqueness of \eqref{NLS}, which conclude
the proof of Theorem~\ref{Bg1}.

\subsection{The proof of theorem~\ref{Ls1}}
In this subsection, we give a sketch of proof of Theorem~\ref{Ls1}.
The proof proceeds follows  that of Theorem~\ref{Bg1}. Instead of
$L^\gamma(0,T;\dot{B}^\sigma_{\rho,2})$ there, we use the space
$L^\gamma(0,T;\dot{H}^\sigma_\rho)$ for  $\alpha<1$, and
Lemma~\ref{bilinear est} in Sobolev version. The main difference is
how to get a similar estimate as \eqref{est3}, now we should
consider $\int^1_0\|f' (v+\theta
(u-v))\|_{\dot{H}^{-\sigma}_{p_1}}\,d\theta$.

Since $f'\in C^{0,\alpha}$, Lemma~\ref{FcrfaHcf} and Sobolev
embedding $\dot{H}^s\hookrightarrow L^\frac{2n}{n-2s}$ lead to
\begin{multline}
\int^1_0\|f' (v+\theta
(u-v))\|_{\dot{H}^{-\sigma}_{p_1}}\,d\theta\lesssim\int^1_0\|v+\theta(u-v)\|_{L^\frac{2n}{n-2s}}^{\alpha+\frac{\sigma}{s}}\|v+\theta(u-v)\|_{\dot{H}^s}^{\frac{-\sigma}{s}}\,d\theta\\
\lesssim(\|u\|_{\dot{H}^s}+\|u\|_{\dot{H}^s})^\alpha.
\end{multline}

Similarly, when $\rho$ is chosen as
$\frac{1}{\rho}=\frac{\sigma}{n}+\frac{1}{2}-\frac{s}{n}$, we can
summarize the conditions imposed on  parameters $q,r,\gamma,\rho,$
and $\sigma$:
\begin{enumerate}
  \item\label{cond1-1}  the choices of $\rho$ and $r$:
  $\frac{1}{\rho}=\frac{\sigma}{n}+\frac{1}{2}-\frac{s}{n},$
  $\frac{1}{r}=\frac{1}{2}-\frac{\sigma}{n}+\frac{s}{n}-\frac{n-2s}{2n}\alpha$;
  \item\label{cond2-2}  $(\gamma,\rho),\ (q,r)$ being $\frac{n}{2}$ -acceptable
  pairs;
  \item\label{cond3-3} $(\gamma,\rho),\ (q,r)$ satisfying the conditions
  \eqref{non-str-est-cond1}-\eqref{non-str-est-cond3};
  \item\label{cond4-4} conditions on $\sigma$ and $r$ for the validity of
  Lemma~\ref{bilinear est},
  \[
  -1<\sigma<0,\ 0<\frac{1}{r}+\frac{\sigma}{n}\leq1,
  \]
  where the second one is equivalent to
  $\frac{2s-n}{n-2s}\leq\alpha<\frac{n+2s}{n-2s}$;
  \item\label{cond5-5} condition on $\sigma$ for the validity of Lemma~\ref{FcrfaHcf},
  \[
   -\alpha s<\sigma.
  \]
\end{enumerate}

These conditions still infer the conditions on $\sigma$,
\begin{eqnarray}
&\max\{s-\frac{n-2s}{2}\alpha, s-\frac{n}{2}, -\alpha s\}<\sigma<0,\label{sigma-cond1-1}\\
&s-\frac{n}{2(n-1)}-\frac{(n-2)(n-2s)}{4(n-1)}\alpha\leq\sigma\leq
s+\frac{n}{2(n-1)}-\frac{n(n-2s)}{4(n-1)}\alpha.\label{sigma-cond2-2}
\end{eqnarray}

Therefore, in order to establish
unconditional uniqueness of \eqref{NLS}, we have to find
$q,r,\gamma,\rho$ and $\sigma$ fulfill all of the restrictions.
Through a series of calculations, these parameters can be chosen if
$s,\alpha$ and $n$ satisfy one of the following conditions:

\begin{itemize}
  \item when $n=3$,
  \begin{equation}\left\{\begin{array}{ll}
                               \frac{2s}{n-2s}<\alpha<1,\ \ &\text{if
                               }s\geq\frac{9}{14},\\
                               \frac{2s}{n-2s}<\alpha<\min{\{1,\frac{2+4s(1-\frac{1}{n})}{(n-2s)-4s(1-\frac{1}{n})}\}},&\text{if
                               }s\leq\frac{9}{14},\\
                               \end{array}
                               \right.
        \end{equation}
  \item when $n=4$,
  \begin{equation}\left\{\begin{array}{ll}
                               \frac{2s}{n-2s}<\alpha<1,\ \ &\text{if
                               }s\geq\frac{4}{5},\\
                               \frac{2s}{n-2s}<\alpha<\min{\{1,\frac{2+4s(1-\frac{1}{n})}{(n-2s)-4s(1-\frac{1}{n})}\}},&\text{if
                               }s\leq\frac{4}{5},\\
                               \end{array}
                               \right.
           \end{equation}
  \item when $n=5$,
  \begin{equation}\left\{\begin{array}{ll}
                               \frac{2s}{n-2s}<\alpha<1,\ \ &\text{if
                               }s\geq\frac{25}{26},\\
                               \frac{2s}{n-2s}<\alpha<\min{\{1,\frac{4}{n-2s},\frac{2+4s(1-\frac{1}{n})}{(n-2s)-4s(1-\frac{1}{n})}\}},&\text{if
                               }s\leq\frac{25}{26},\\
                               \end{array}
                               \right.
          \end{equation}
  \item  when $n\geq6$,
  \begin{equation}
  \frac{2s}{n-2s}<\alpha<\min{\{\frac{4}{n-2s},\frac{2+4s(1-\frac{1}{n})}{(n-2s)-4s(1-\frac{1}{n})}\}}.
   \end{equation}
\end{itemize}

Next, we consider the case
$\frac{2+4s(1-\frac{1}{n})}{(n-2s)-4s(1-\frac{1}{n})}\leq\alpha<\min\{1,\frac{4}{n-2s},\frac{n+2s}{n-2s}\}$.
As showed in Part~3.2 ,  $u-v$ is in the space
$L^\gamma(0,T;\dot{H}^\sigma_\rho)$ with
$\frac{1}{\rho}=\frac{\sigma}{n}+\frac{1}{2}-\frac{s}{n}+\frac{n-2s}{2n}\alpha-\frac{2}{n}$
. Therefore, we invoke the property and the same argument as above
to reduce the restrictions of $\sigma$:
\begin{eqnarray}
&\max\{s+2-(n-2s)\alpha, -\alpha s\}<\sigma<s+2-\frac{n-2s}{2}\alpha-\frac{n}{2(n-1)},\label{sigma-cond1-3}\\
&s+2-\frac{n}{2(n-1)}-\frac{3n/4-1}{n-1}(n-2s)\alpha<\sigma<0.\label{sigma-cond2-4}
\end{eqnarray}

In order to ask that $\sigma$ satisfies the conditions
\eqref{sigma-cond1-3}-\eqref{sigma-cond2-4}, the following
relationship has to be satisfied
\[
\frac{2ns+3n-2s-4}{(3n/2-2)(n-2s)}<\alpha<\frac{2s+4-\frac{n}{n-1}}{n-4s}.
\]

Noticed the prior assumption
$\frac{2+4s(1-\frac{1}{n})}{(n-2s)-4s(1-\frac{1}{n})}\leq\alpha<\min\{1,\frac{4}{n-2s},\frac{n+2s}{n-2s}\}$,
we can see
\[
\frac{2ns+3n-2s-4}{(3n/2-2)(n-2s)}<\frac{2+4s(1-\frac{1}{n})}{(n-2s)-4s(1-\frac{1}{n})}<\frac{2s+4-\frac{n}{n-1}}{n-4s}.
\]
Therefore, we have shown unconditional uniqueness under the
following conditions:
\[
\frac{2+4s(1-\frac{1}{n})}{(n-2s)-4s(1-\frac{1}{n})}\leq\alpha<\min\{1,\frac{4}{n-2s},\frac{n+2s}{n-2s},\frac{2s+4-\frac{n}{n-1}}{n-4s}\}.
\]

In summary, by the results of above, we have unconditional
uniqueness of \eqref{NLS} if $\alpha,s$ and $n$ satisfy the
following conditions:
\begin{itemize}
  \item when $n=3$,
  \begin{equation}\left\{\begin{array}{ll}
                               \frac{2s}{3-2s}<\alpha<1,\ \ &\text{if
                               }s>\frac{3}{4},\\
                               \frac{2s}{3-2s}<\alpha<\min{\{1,\frac{2s+\frac{5}{2}}{3-4s}\}},&\text{if
                               }s\leq\frac{3}{4},\\
                               \end{array}
                               \right.
        \end{equation}
  \item  when $n\geq4$,
  \begin{equation}
  \frac{2s}{n-2s}<\alpha<\min{\{1,\frac{4}{n-2s},\frac{2s+4-\frac{n}{n-1}}{n-4s}\}}.
   \end{equation}
\end{itemize} Hence, we finish the proof of Theorem~\ref{Ls1}.

\section{The proof of Theorems~1.5 and ~1.6 }

In this section, we give the proof of Theorems~1.5 and ~1.6. By the
argument of the subcritical case, one can find $\sigma$ is a
function with respect to parameters $\alpha$, $s$ and $n$. If we
consider the critical case, $\alpha$ can be determined by $s$. So
for some special dimensions, we can fix the choice of $\sigma$.
\subsection{The case of $n=2$, $\alpha=\frac{2+2s}{2-2s}$ and $0<s<1$}

\subsubsection{\textbf{The case of $n=2$, $\alpha=\frac{2+2s}{2-2s}$ and
$0<s<\frac{1}{2}$}}

In this situation, we select $\sigma=-s+\varepsilon$,
$\frac{1}{\lambda}=\frac{1}{2}+\frac{\varepsilon}{2}$,
$(\frac{1}{a},\frac{1}{b})=(\frac{s}{2},\frac{1}{2}-s)$, where
$\varepsilon$ is a sufficiently small constant such that
$0<\varepsilon<s$.

By Sobolev Embedding $L^1\hookrightarrow
\dot{H}^\sigma_{\frac{2}{2+\sigma}}$, $\dot{H}^s\hookrightarrow
L^\frac{4}{2-2s}$, H\"older inequality and
$|f(u)-f(v)|\lesssim(|u|^\alpha+|v|^\alpha)(u-v)$, we can see
\begin{multline*}
\|f(u)-f(v)\|_{\dot{H}^\sigma_{\frac{2}{2+\sigma}}}\lesssim\|f(u)-f(v)\|_{L^1}\\
\lesssim(\|u\|_{L^\frac{4}{2-2s}}^\alpha+\|v\|_{L^\frac{4}{2-2s}}^\alpha)\|u-v\|_{L^\frac{4}{2-2s}}\lesssim\|u,v\|_{\dot{H}^s}^{\alpha+1}.
\end{multline*}
It is easy to see that $(a,b)$ and $(\lambda,-\frac{2}{\sigma})$ are
$\frac{n}{2}$-acceptable pairs, then by Lemma~\ref{non-str-est}, we
can get
\begin{equation}\label{bd1}
\|u-v\|_{L^a(0,T;\dot{H}^\sigma_b)}\lesssim\|f(u)-f(v)\|_{L^{\lambda'}(0,T;\dot{H}^\sigma_\frac{2}{2+\sigma})}\lesssim
T^\frac{1}{\lambda'}\|u,v\|_{L^\infty(0,T;\dot{H}^s)}^{\alpha+1},
\end{equation}
where $(a,b)$ and $(\lambda,-\frac{2}{\sigma})$ satisfy the
conditions of $n=2$ in Lemma~\ref{non-str-est}. From \eqref{bd1},
one can find if $u,v\in L^\infty(0,T;\dot{H}^s)$, then $u-v\in
L^a(0,T;\dot{H}^\sigma_b).$

Furthermore, we choose $(q,r)=(2,\frac{2}{s})$ which is an
$\frac{n}{2}$-acceptable pair. By a simple calculation, one can see
$(a,b)$ and $(q,r)$ satisfying the conditions of
Lemma~\ref{non-str-est}, then it follows that
\begin{equation}\label{n2iq2}
\|u-v\|_{L^a(0,T;\dot{H}^\sigma_b)}\lesssim\|f(u)-f(v)\|_{L^{q'}(0,T;\dot{H}^\sigma_{r'})}.
\end{equation}
 If we select $\frac{1}{p_1}=\frac{1}{2}+s-\frac{\varepsilon}{2}$
 and $\frac{1}{p_3}=\frac{1+s}{2}$, then $r,b,p_1$ and $p_3$ satisfy the conditions of bilinear estimate
 Lemma~\ref{bilinear est}. So it follows that
 \begin{multline}\label{n2iq3}
\|f(u)-f(v)\|_{\dot{H}^\sigma_{r'}}\\
\lesssim\|u-v\|_{\dot{H}^\sigma_b}\Big(\int^1_0\|f'(\theta
u+(1-\theta)v)\|_{\dot{H}^{-\sigma}_{p_1}}\,d\theta+\int^1_0\|f'(\theta
u+(1-\theta)v)\|_{L^{p_3}}\,d\theta\Big).
\end{multline}
For the second term of the right hand side of \eqref{n2iq3}, by
Sobolev Embedding $\dot{H}^s\hookrightarrow L^\frac{4}{2-2s}$, we
can obtain
\begin{equation}\label{n2iq4}
\int^1_0\|f'(\theta
u+(1-\theta)v)\|_{L^{p_3}}\,d\theta\lesssim(\|u\|_{L^{p_3\alpha}}+\|u\|_{L^{p_3\alpha}})^\alpha\lesssim(\|u\|_{\dot{H}^s}+\|u\|_{\dot{H}^s})^\alpha.
\end{equation}
For the first term of the right hand side of \eqref{n2iq3}, by
Lemma~\ref{Lchainrule} and $\dot{H}^s\hookrightarrow
\dot{H}^{-\sigma}_l$, where
$\frac{1}{l}=\frac{1}{2}-\frac{\varepsilon}{2}$, we can obtain
\begin{multline}\label{n2iq5}
\int^1_0\|f'(\theta
u+(1-\theta)v)\|_{\dot{H}^{-\sigma}_{p_1}}\,d\theta\lesssim\Big(\|u\|_{L^\frac{4}{2-2s}}+\|v\|_{L^\frac{4}{2-2s}}\Big)^{\alpha-1}(\|u\|_{\dot{H}^{-\sigma}_l}+\|v\|_{\dot{H}^{-\sigma}_l})\\
\lesssim(\|u\|_{\dot{H}^s}+\|v\|_{\dot{H}^s})^\alpha.
\end{multline}
In conclusion, by \eqref{n2iq2}-\eqref{n2iq5} and H\"older
inequality on time, we can obtain
\begin{equation}
\|u-v\|_{L^a(0,T;\dot{H}^\sigma_b)}\lesssim
T^{\frac{1}{2}-\frac{1}{a}}\Big(\|u\|_{L^\infty(0,T;\dot{H}^s)}+\|v\|_{L^\infty(0,T;\dot{H}^s)}\Big)^\alpha\|u-v\|_{L^a(0,T;\dot{H}^\sigma_b)},
\end{equation}
which shows the unconditional uniqueness if $T$ sufficiently small.

\subsubsection{\textbf{The case of $n=2$, $\alpha=\frac{2+2s}{2-2s}$ and $\frac{1}{2}\leq
s<1$}} In this case, the conclusion follows from the same argument as above, but the choice of
\begin{eqnarray*}
&\sigma=s-1+2\varepsilon,\
&\frac{1}{a}=\frac{1}{2}-\frac{\varepsilon}{2},\quad
\frac{1}{b}=\frac{\varepsilon}{2},\\
&\frac{1}{\lambda}=\frac{s}{2}+\varepsilon,\
&\frac{1}{q}=\frac{s}{2}+\frac{\varepsilon}{2},\quad
\frac{1}{r}=\frac{1}{2}-\frac{\varepsilon}{2}-\frac{s}{2},\\
&\frac{1}{p_1}=1-\varepsilon,\
&\frac{1}{p_3}=\frac{1}{2}+\frac{s}{2},
\end{eqnarray*}
where $\varepsilon$ is a sufficiently small constant such that
$0<\varepsilon<\frac{1}{2}-\frac{s}{2}$.
\subsection{The case of $n=3$, $\alpha=\min\{\frac{3+2s}{3-2s},\frac{4}{3-2s}\}$}
\subsubsection{\textbf{The case of $n=3$, $\alpha=\frac{3+2s}{3-2s}$ and $\frac{1}{4}<s<\frac{1}{2}$}}

Similar to the case of $n=2$,  in this case, to get the conclusion we need to use the non-sharp case of
Lemma~\ref{non-str-est} for $n=3$ and choose
\begin{eqnarray*}
&\sigma=-s,\ &\frac{1}{a}=\frac{1}{2}-\frac{1}{b},\\
&\frac{1}{\lambda}=1-\frac{s}{2}-\frac{1}{2b},\
&\frac{1}{q}=\frac{1}{4}+\frac{s}{2}+\frac{1}{b},\quad
\frac{1}{r}=\frac{1}{2}-\frac{1}{b}-\frac{s}{3},\\
&\frac{1}{p_1}=\frac{1}{2}+\frac{2s}{3},\
&\frac{1}{p_3}=\frac{1}{2}+\frac{s}{3},
\end{eqnarray*}
and $\frac{1}{b}$ satisfies
$\frac{1}{3}-\frac{s}{3}<\frac{1}{b}<\min\{s,\frac{1}{2}-\frac{2s}{3}\}$.

So far, we have completed the proof of Theorem~1.5.

\subsubsection{\textbf{The case of $n=3$, $\alpha=\frac{4}{3-2s}$ and $\frac{1}{2}<s<1$}}

In this case, we choose $\sigma=s-1$. By Lemma~\ref{non-str-est} (or
classical Strichartz estimates), we can obtain
\begin{equation}\label{n3iq}
\|u-v\|_{L^2(0,T;\dot{H}^\sigma_6)}+\|u-v\|_{L^4(0,T;\dot{H}^\sigma_3)}\lesssim\|f(u)-f(v)\|_{L^2(0,T;\dot{H}^\sigma_{6/5})}.
\end{equation}
By Sobolev Embedding
$L^\frac{6}{7-2s}\hookrightarrow\dot{H}^\sigma_{6/5}$,
$\dot{H}^s\hookrightarrow L^\frac{6}{3-2s}$ and
$|f(u)-f(v)|\lesssim(|u|^\alpha+|v|^\alpha)(u-v)$, we have
\begin{equation*}
\|f(u)-f(v)\|_{\dot{H}^\sigma_{6/5}}\lesssim\|u,v\|_{\dot{H}^s}^{\alpha+1},
\end{equation*}
it follows $u-v\in L^2(0,T;\dot{H}^\sigma_6)\cap
L^4(0,T;\dot{H}^\sigma_3)$.

We denote that
\begin{multline}
f(u)-f(v)=\Big[\int^1_0f'(\theta
u+(1-\theta)v)\,d\theta\Big](u-v)\\
=\Big\{\int^1_0[P_{\leq N}f'(\theta
u+(1-\theta)v)]\,d\theta\Big\}(u-v)+\Big\{\int^1_0[P_{>N}f'(\theta
u+(1-\theta)v)]\,d\theta\Big\}(u-v)\\
:=(I)+(II).
\end{multline}

For the term (II), by using Lemma~\ref{bilinear est} with
$\frac{1}{p_1}=1-\frac{s}{3}$ and $\frac{1}{p_3}=\frac{2}{3}$, we
can obtain
\begin{multline}\label{n3iq3}
\|(II)\|_{L^2(0,T;\dot{H}^\sigma_{6/5})}\lesssim\Big\{\|\int^1_0[P_{>N}f'(\theta
u+(1-\theta)v)]\,d\theta\|_{L^\infty(0,T;\dot{H}^{-\sigma}_{p_1})}\\
+\|\int^1_0[P_{>N}f'(\theta
u+(1-\theta)v)]\,d\theta\|_{L^\infty(0,T;L^{p_3})}\Big\}\|u-v\|_{L^2(0,T;\dot{H}^\sigma_6)}
\end{multline}

It follows from Lemma~\ref{Lchainrule} and the Sobolev Embedding
$\dot{H}^s\hookrightarrow L^\frac{6}{3-2s}$ that
\begin{multline}
\|\int^1_0[P_{>N}f'(\theta
u+(1-\theta)v)]\,d\theta\|_{L^\infty(0,T;\dot{H}^{-\sigma}_{p_1})}\\
+\|\int^1_0[P_{>N}f'(\theta
u+(1-\theta)v)]\,d\theta\|_{L^\infty(0,T;L^{p_3})}\lesssim\Big(\|u\|_{L^\infty(0,T;\dot{H}^s)}+\|v\|_{L^\infty(0,T;\dot{H}^s})\Big)^\alpha
\end{multline}
Note that $u,v\in C([0,T],\dot{H}^s)$, so we can find a uniform
$N_0$ independent on time such that when $N>N_0$,
\begin{multline}\label{n3iq2}
C\Big(\|\int^1_0[P_{>N}f'(\theta
u+(1-\theta)v)]\,d\theta\|_{L^\infty(0,T;\dot{H}^{-\sigma}_{p_1})}\\
+\|\int^1_0[P_{>N}f'(\theta
u+(1-\theta)v)]\,d\theta\|_{L^\infty(0,T;L^{p_3})}\Big)\leq\frac{1}{2}.
\end{multline}
For the term (I), by using the same method applied for the proof of
Lemma~\ref{bilinear est} and Bernstein inequality, we obtain
\begin{multline}
\|(P_{\leq
N}f')(u-v)\|_{\dot{H}^\sigma_{6/5}}\lesssim\|u-v\|_{\dot{H}^\sigma_3}(\|P_{\leq
N}f'\|_{\dot{H}^{-\sigma}_{\frac{6}{5-2s}}}+\|P_{\leq
N}f'\|_{L^2})\\
\lesssim N^\frac{1}{2}\|u-v\|_{\dot{H}^\sigma_3}(\|P_{\leq
N}f'\|_{\dot{H}^{-\sigma}_{p_1}}+\|P_{\leq N}f'\|_{L^{p_3}})
\end{multline}
By Lemma~\ref{Lchainrule}, the Sobolev Embedding
$\dot{H}^s\hookrightarrow L^\frac{6}{3-2s}$ and H\"older's
inequality on time, it follows that
\begin{equation}\label{n3iq1}
\|(I)\|_{L^2(0,T;\dot{H}^\sigma_{6/5})}\lesssim
T^\frac{1}{4}N^\frac{1}{2}
\Big(\|u\|_{L^\infty(0,T;\dot{H}^s)}+\|v\|_{L^\infty(0,T;\dot{H}^s)}\Big)^\alpha\|u-v\|_{L^4(0,T;\dot{H}^\sigma_3)}.
\end{equation}
Then by \eqref{n3iq}, \eqref{n3iq1}, \eqref{n3iq3} and
\eqref{n3iq2}, we  have
\begin{equation}
\|u-v\|_{L^2(0,T;\dot{H}^\sigma_6)}+\|u-v\|_{L^4(0,T;\dot{H}^\sigma_3)}\leq\frac{1}{2}\|u-v\|_{L^2(0,T;\dot{H}^\sigma_6)}+CT^\frac{1}{4}N^\frac{1}{2}
\|u-v\|_{L^4(0,T;\dot{H}^\sigma_3)}.
\end{equation}
If we choose $T$ small enough so that
$CT^\frac{1}{4}N^\frac{1}{2}<\frac{1}{4}$, then the right hand side
can be absorbed by the left hand side, which shows the unconditional
uniqueness.

\subsection{The case of $n\geq4,\ \alpha=\frac{4}{n-2s}$}

\subsubsection{\textbf{The boundness of the norm of $u-v$}}
Suppose $(a,b)$ and $(\lambda,\frac{2n}{n-2\sigma-4+2s})$ are
$\frac{n}{2}$-acceptable pairs. By Sobolev Embedding
$L^\frac{2n}{4+n-2s}\hookrightarrow
\dot{H}^\sigma_{\frac{2n}{2\sigma+4+n-2s}}$,
$\dot{H}^s\hookrightarrow L^\frac{2n}{n-2s}$, H\"older inequality
and $|f(u)-f(v)|\lesssim(|u|^\alpha+|v|^\alpha)(u-v)$, we can get
\begin{equation}\label{n4bd1}
\|f(u)-f(v)\|_{\dot{H}^\sigma_{\frac{2n}{2\sigma+4+n-2s}}}\lesssim\|u,v\|_{\dot{H}^s}^{\alpha+1}.
\end{equation}
If we can show
\begin{equation}\label{n4bd2}
\|u-v\|_{L^a(0,T;\dot{H}^\sigma_b)}\lesssim\|f(u)-f(v)\|_{L^{\lambda'}(0,T;\dot{H}^\sigma_{\frac{2n}{2\sigma+4+n-2s}})},
\end{equation}
then toghter with \eqref{n4bd1} and $u,v\in
L^\infty(0,T;\dot{H}^s)$, we obtain the boundness of
$\|u-v\|_{L^a(0,T;\dot{H}^\sigma_b)}$.

In order that \eqref{n4bd2} holds, by the non-sharp case of
Lemma~\ref{non-str-est}, the conditions
\eqref{non-str-est-cond1}-\eqref{non-str-est-cond3} have to be
fulfilled besides $(a,b)$ and $(\lambda,\frac{2n}{n-2\sigma-4+2s})$
being $\frac{n}{2}$-acceptable pairs.

According to the computation, if
$s-2<\sigma<\min\{0,s-\frac{n}{2(n-1)}\}$, we can find $(a,b)$ and
$(\lambda,\frac{2n}{n-2\sigma-4+2s})$ that satisfy all above
conditions. Here we list the restrictions on $b$, which are useful
for the following estimates
\begin{eqnarray}\label{conditions on b}
\left\{\begin{array}{ll}
 \frac{2\sigma+n-2s}{2n}<\frac{1}{b}<\frac{1}{2},\\
 \frac{(n-2)(n-2\sigma-4+2s)}{2n^2}\leq\frac{1}{b}\leq\frac{n-2\sigma-4+2s}{2(n-2)}.
 \end{array}
 \right.
\end{eqnarray}
\subsubsection{\textbf{The case of $n=4,5$, $\alpha=\frac{4}{n-2s}\geq1$ and $\frac{n}{4(n-1)}<s<1$}}

Suppose $(\gamma,\rho)$ is an $\frac{n}{2}$-acceptable pair with
$\frac{1}{\rho}=\frac{\sigma}{n}+\frac{1}{2}-\frac{s}{n}$, and
$\sigma$ satisfies the conditions
\begin{equation}\label{sigmaconditions}
-s\leq\sigma<0,\qquad
s-\frac{3n-4}{2(n-1)}<\sigma<s-\frac{n}{2(n-1)}.
\end{equation}

If the conditions in \eqref{sigmaconditions} hold, then by
Lemma~\ref{non-str-est}, we can find
$(\gamma,\rho),\ (a,b)$ and $\lambda$ such that
\begin{equation}\label{n4bd3}
\|u-v\|_{L^\gamma(0,T;\dot{H}^\sigma_\rho)}+\|u-v\|_{L^a(0,T;\dot{H}^\sigma_b)}\lesssim\|f(u)-f(v)\|_{L^{\lambda'}(0,T;\dot{H}^\sigma_{\frac{2n}{2\sigma+4+n-2s}})}.
\end{equation}
By the condition \eqref{non-str-est-cond1}, one can see
$\frac{1}{\gamma}+\frac{1}{\lambda}=\frac{n}{2}(1-\frac{1}{\rho}-\frac{n-2\sigma-4+2s}{2n})=1$.

We denote that
\begin{multline}
f(u)-f(v)=\Big[\int^1_0f'(\theta
u+(1-\theta)v)\,d\theta\Big](u-v)\\
=\Big\{\int^1_0[P_{\leq N}f'(\theta
u+(1-\theta)v)]\,d\theta\Big\}(u-v)+\Big\{\int^1_0[P_{>N}f'(\theta
u+(1-\theta)v)]\,d\theta\Big\}(u-v)\\
:=(I)+(II).
\end{multline}

For the term (II), by using Lemma~\ref{bilinear est} with
$\frac{1}{p_1}=\frac{2}{n}-\frac{\sigma}{n}$ and
$\frac{1}{p_3}=\frac{2}{n}$ and \eqref{sigmaconditions}, we can
obtain
\begin{multline}\label{n4bd4}
\|(II)\|_{L^{\lambda'}(0,T;\dot{H}^\sigma_{\frac{2n}{2\sigma+4+n-2s}})}\lesssim\Big\{\|\int^1_0[P_{>N}f'(\theta
u+(1-\theta)v)]\,d\theta\|_{L^\infty(0,T;\dot{H}^{-\sigma}_{p_1})}\\
+\|\int^1_0[P_{>N}f'(\theta
u+(1-\theta)v)]\,d\theta\|_{L^\infty(0,T;L^{p_3})}\Big\}\|u-v\|_{L^\gamma(0,T;\dot{H}^\sigma_\rho)}.
\end{multline}

Lemma~\ref{Lchainrule}, \eqref{sigmaconditions}, Sobolev Embedding
$\dot{H}^s\hookrightarrow L^\frac{2n}{n-2s}$ and
$\dot{H}^s\hookrightarrow\dot{H}^{-\sigma}_{\frac{2n}{n-2(s+\sigma)}}$
deduce that
\begin{multline}
\|\int^1_0[P_{>N}f'(\theta
u+(1-\theta)v)]\,d\theta\|_{L^\infty(0,T;\dot{H}^{-\sigma}_{p_1})}\\
+\|\int^1_0[P_{>N}f'(\theta
u+(1-\theta)v)]\,d\theta\|_{L^\infty(0,T;L^{p_3})}\lesssim\Big(\|u\|_{L^\infty(0,T;\dot{H}^s)}+\|v\|_{L^\infty(0,T;\dot{H}^s)}\Big)^\alpha
\end{multline}
Since $u,v\in C([0,T],\dot{H}^s)$, then we can find a uniform $N_0$
independent on time such that when $N>N_0$,
\begin{multline}\label{n4bd5}
C\Big(\|\int^1_0[P_{>N}f'(\theta
u+(1-\theta)v)]\,d\theta\|_{L^\infty(0,T;\dot{H}^{-\sigma}_{p_1})}\\
+\|\int^1_0[P_{>N}f'(\theta
u+(1-\theta)v)]\,d\theta\|_{L^\infty(0,T;L^{p_3})}\Big)\leq\frac{1}{2}.
\end{multline}

For the term (I), by using the same method applied for the proof of
Lemma~\ref{bilinear est}, we  have
\begin{equation}
\|(P_{\leq
N}f')(u-v)\|_{\dot{H}^\sigma_{r'}}\lesssim\|u-v\|_{\dot{H}^\sigma_b}(\|P_{\leq
N}f'\|_{\dot{H}^{-\sigma}_{x_1}}+\|P_{\leq N}f'\|_{L^{x_2}}),
\end{equation}
where$\frac{1}{x_1}=\frac{1}{2}-\frac{s}{n}+\frac{2}{n}-\frac{1}{b}$
and
$\frac{1}{x_2}=\frac{1}{2}+\frac{\sigma}{n}-\frac{s}{n}+\frac{2}{n}-\frac{1}{b}$.

If $\sigma$ satisfies \eqref{sigmaconditions}, then the conditions
on $b$ in \eqref{conditions on b} hold. So by Bernstein's
inequality, one has
\begin{eqnarray}
\|P_{\leq N}f'\|_{\dot{H}^{-\sigma}_{x_1}}&\lesssim&
N^{n(\frac{1}{b}-\frac{2\sigma-2s+n}{2n})}\|P_{\leq
N}f'\|_{\dot{H}^{-\sigma}_{p_1}};\\
\|P_{\leq N}f'\|_{L^{x_2}}&\lesssim&
N^{n(\frac{1}{b}-\frac{2\sigma-2s+n}{2n})}\|P_{\leq
N}f'\|_{L^{p_3}}.
\end{eqnarray}

By Lemma~\ref{Lchainrule}, \eqref{sigmaconditions}, Sobolev
Embedding $\dot{H}^s\hookrightarrow L^\frac{2n}{n-2s}$,
$\dot{H}^s\hookrightarrow\dot{H}^{-\sigma}_{\frac{2n}{n-2(s+\sigma)}}$
and H\"older inequality on time, we have
\begin{multline}\label{n4bd6}
\|(I)\|_{L^{\lambda'}(0,T;\dot{H}^\sigma_{\frac{2n}{2\sigma+4+n-2s}})}\\
\leq
CT^{1-\frac{1}{\lambda}-\frac{1}{a}}N^{n(\frac{1}{b}-\frac{2\sigma-2s+n}{2n})}
\Big(\|u\|_{L^\infty(0,T;\dot{H}^s)}+\|v\|_{L^\infty(0,T;\dot{H}^s)}\Big)^\alpha\|u-v\|_{L^a(0,T;\dot{H}^\sigma_b)}\\
\leq\frac{1}{4}\|u-v\|_{L^a(0,T;\dot{H}^\sigma_b)},
\end{multline}
if $T$ is small enough such that
\[
CT^{1-\frac{1}{\lambda}-\frac{1}{a}}N^{n(\frac{1}{b}-\frac{2\sigma-2s+n}{2n})}
\Big(\|u\|_{L^\infty(0,T;\dot{H}^s)}+\|v\|_{L^\infty(0,T;\dot{H}^s)}\Big)^\alpha<\frac{1}{4}.
\]

In conclusion, by \eqref{n4bd3},\eqref{n4bd4},\eqref{n4bd5} and
\eqref{n4bd6}, we can have
\begin{equation}
\|u-v\|_{L^\gamma(0,T;\dot{H}^\sigma_\rho)}+\|u-v\|_{L^a(0,T;\dot{H}^\sigma_b)}<\frac{3}{4}(\|u-v\|_{L^\gamma(0,T;\dot{H}^\sigma_\rho)}+\|u-v\|_{L^a(0,T;\dot{H}^\sigma_b)}),
\end{equation}
which shows the unconditional uniqueness.

\subsubsection{\textbf{The case of $n\geq5$, $\alpha=\frac{4}{n-2s}<1$ and $s_0<s<1$}}

The proof is similar to that for $n=4,5$ with $\alpha\geq1$,
except that we apply Lemma~\ref{FcrfaHcf} instead of
Lemma~\ref{Lchainrule}.

We still suppose $(\gamma,\rho)$ is a $\frac{n}{2}$-acceptable pair with $\frac{1}{\rho}=\frac{\sigma}{n}+\frac{1}{2}-\frac{s}{n}$, and
$\sigma$ satisfies the conditions
\begin{equation}\label{sigmaconditions*}
\frac{-4s}{n-2s}<\sigma<0,\qquad
s-\frac{3n-4}{2(n-1)}<\sigma<s-\frac{n}{2(n-1)}
\end{equation}

By
Lemma~\ref{non-str-est}, the conditions in \eqref{sigmaconditions*} can help us to find
$(\gamma,\rho),\ (a,b)$ and $\lambda$ such that
\begin{equation}\label{n6bd1}
\|u-v\|_{L^\gamma(0,T;\dot{H}^\sigma_\rho)}+\|u-v\|_{L^a(0,T;\dot{H}^\sigma_b)}\lesssim\|f(u)-f(v)\|_{L^{\lambda'}(0,T;\dot{H}^\sigma_{\frac{2n}{2\sigma+4+n-2s}})},
\end{equation}
where
$\frac{1}{\gamma}+\frac{1}{\lambda}=\frac{n}{2}(1-\frac{1}{\rho}-\frac{n-2\sigma-4+2s}{2n})=1$
from the condition \eqref{non-str-est-cond1}.

We denote that
\begin{multline}
f(u)-f(v)=\Big[\int^1_0f'(\theta
u+(1-\theta)v)\,d\theta\Big](u-v)\\
=\Big\{\int^1_0[P_{\leq N}f'(\theta
u+(1-\theta)v)]\,d\theta\Big\}(u-v)+\Big\{\int^1_0[P_{>N}f'(\theta
u+(1-\theta)v)]\,d\theta\Big\}(u-v)\\
:=(I)+(II).
\end{multline}

For the term (II), from Lemma~\ref{bilinear est} with
$\frac{1}{p_1}=\frac{2}{n}-\frac{\sigma}{n}$, and
$\frac{1}{p_3}=\frac{2}{n}$, and \eqref{sigmaconditions*}, one can get
\begin{multline}\label{n6bd2}
\|(II)\|_{L^{\lambda'}(0,T;\dot{H}^\sigma_{\frac{2n}{2\sigma+4+n-2s}})}\lesssim\Big\{\|\int^1_0[P_{>N}f'(\theta
u+(1-\theta)v)]\,d\theta\|_{L^\infty(0,T;\dot{H}^{-\sigma}_{p_1})}\\
+\|\int^1_0[P_{>N}f'(\theta
u+(1-\theta)v)]\,d\theta\|_{L^\infty(0,T;L^{p_3})}\Big\}\|u-v\|_{L^\gamma(0,T;\dot{H}^\sigma_\rho)}
\end{multline}

By using Lemma~\ref{FcrfaHcf}, \eqref{sigmaconditions*} and Sobolev Embedding
$\dot{H}^s\hookrightarrow L^\frac{2n}{n-2s}$, we can obtain
\begin{multline}
\|\int^1_0[P_{>N}f'(\theta
u+(1-\theta)v)]\,d\theta\|_{L^\infty(0,T;\dot{H}^{-\sigma}_{p_1})}\\
+\|\int^1_0[P_{>N}f'(\theta
u+(1-\theta)v)]\,d\theta\|_{L^\infty(0,T;L^{p_3})}\lesssim\Big(\|u\|_{L^\infty(0,T;\dot{H}^s)}+\|v\|_{L^\infty(0,T;\dot{H}^s})\Big)^\alpha
\end{multline}
Since $u,v\in C([0,T],\dot{H}^s)$, then one can find a uniform $N_0$
independent on time such that when $N>N_0$,
\begin{multline}\label{n6bd4}
C\Big(\|\int^1_0[P_{>N}f'(\theta
u+(1-\theta)v)]\,d\theta\|_{L^\infty(0,T;\dot{H}^{-\sigma}_{p_1})}\\
+\|\int^1_0[P_{>N}f'(\theta
u+(1-\theta)v)]\,d\theta\|_{L^\infty(0,T;L^{p_3})}\Big)\leq\frac{1}{2}.
\end{multline}

For the term (I), we use the same argument as that in the case of $n=4,5$ and
$\alpha\geq1$ to get
\begin{multline}\label{n6bd3}
\|(I)\|_{L^{\lambda'}(0,T;\dot{H}^\sigma_{\frac{2n}{2\sigma+4+n-2s}})}\\
\leq
CT^{1-\frac{1}{\lambda}-\frac{1}{a}}N^{n(\frac{1}{b}-\frac{2\sigma-2s+n}{2n})}
\Big(\|\int^1_0[P_{>N}f'(\theta
u+(1-\theta)v)]\,d\theta\|_{L^\infty(0,T;\dot{H}^{-\sigma}_{p_1})}\\
+\|\int^1_0[P_{>N}f'(\theta
u+(1-\theta)v)]\,d\theta\|_{L^\infty(0,T;L^{p_3})}\Big)\|u-v\|_{L^a(0,T;\dot{H}^\sigma_b)}\\
\leq C
T^{1-\frac{1}{\lambda}-\frac{1}{a}}N^{n(\frac{1}{b}-\frac{2\sigma-2s+n}{2n})}
\Big(\|u\|_{L^\infty(0,T;\dot{H}^s)}+\|v\|_{L^\infty(0,T;\dot{H}^s)}\Big)^\alpha\|u-v\|_{L^a(0,T;\dot{H}^\sigma_b)}\\
\leq\frac{1}{4}\|u-v\|_{L^a(0,T;\dot{H}^\sigma_b)},
\end{multline}
if $T$ is sufficiently small such that
\[
CT^{1-\frac{1}{\lambda}-\frac{1}{a}}N^{n(\frac{1}{b}-\frac{2\sigma-2s+n}{2n})}
\Big(\|u\|_{L^\infty(0,T;\dot{H}^s)}+\|v\|_{L^\infty(0,T;\dot{H}^s)}\Big)^\alpha<\frac{1}{4}.
\]

In conclusion, by using \eqref{n6bd1},\eqref{n6bd2},\eqref{n6bd3} and
\eqref{n6bd4}, we  have
\begin{equation}
\|u-v\|_{L^\gamma(0,T;\dot{H}^\sigma_\rho)}+\|u-v\|_{L^a(0,T;\dot{H}^\sigma_b)}<\frac{3}{4}(\|u-v\|_{L^\gamma(0,T;\dot{H}^\sigma_\rho)}+\|u-v\|_{L^a(0,T;\dot{H}^\sigma_b)}),
\end{equation}
which shows the unconditional uniqueness, and we complete the proof
of Theorem~1.6.

\section*{Acknowledgement}
The authors would like to thank Professor T. Cazenave and Professor
Y. Tsutsumi for their helpful suggestions. This work is supported partially by NSFC
10871175, 10931007, and Zhejiang NSF of China Z6100217,

\begin{figure}
\centering
\includegraphics[width=0.8\textwidth,height=0.4\textheight]{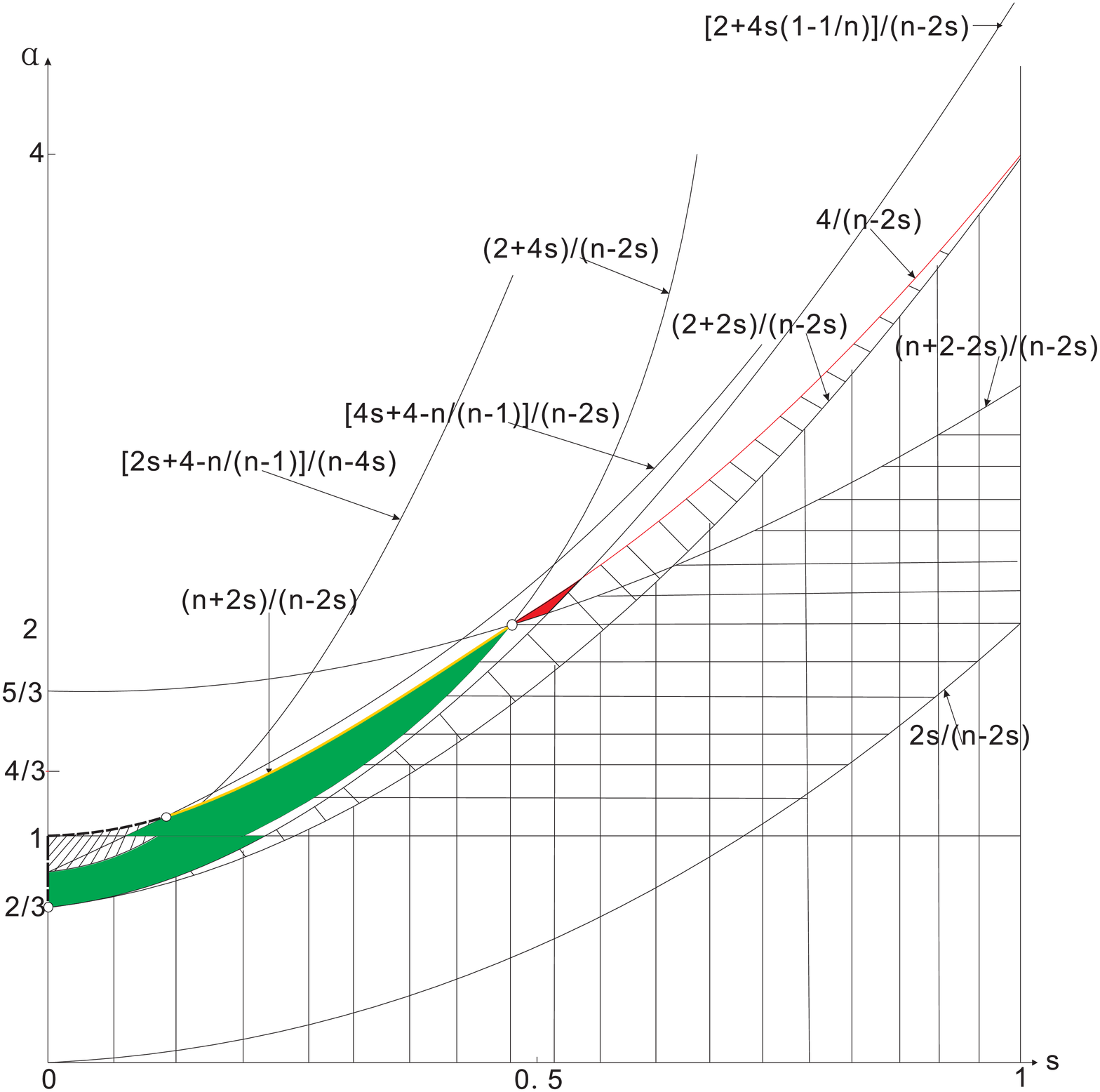}
\caption{Case $n=3$} Kato--Vertical route area; Furioli and Terraneo--
Horizontal area; Rogers--Oblique line; Win and Tsutsumi--Red color;
Open parts-Left slashes and thick dashed lines; Beside to cover the known areas, the new part 
of our results--Green color for the subcritical case and
yellow one for the critical cases.
\includegraphics[width=0.8\textwidth,height=0.4\textheight]{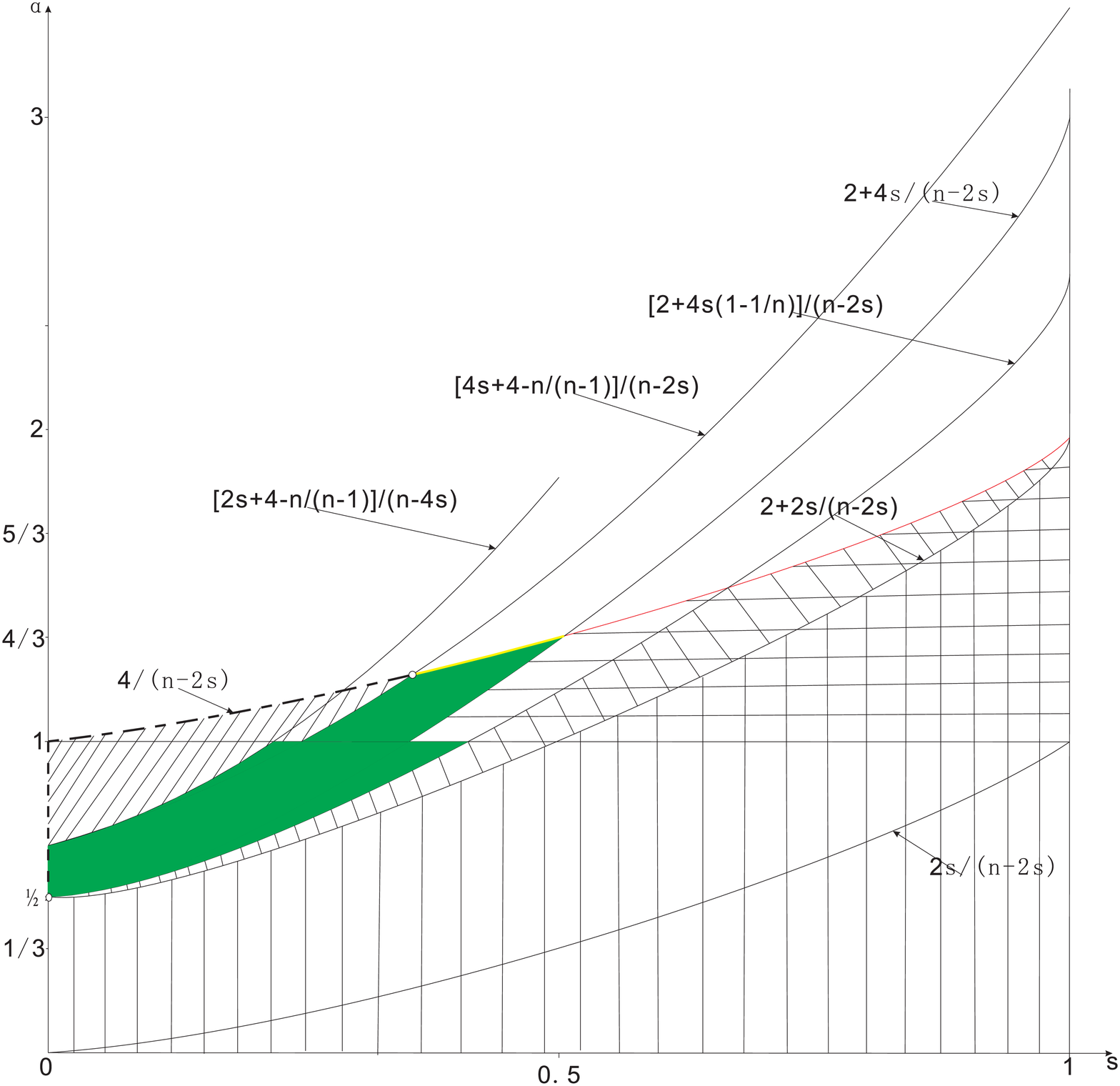}
\caption{Case $n=4$}Kato--Vertical route area; Furioli and Terraneo--
Horizontal area; Rogers--Oblique line; Win and Tsutsumi--Red color;
Open parts-Left slashes and thick dashed lines; Beside to cover the known areas, the new part
of our results--Green color for the subcritical case and
yellow one for the critical cases.
 \end{figure}
 \begin{figure}
 \centering
\includegraphics[width=0.8\textwidth,height=0.4\textheight]{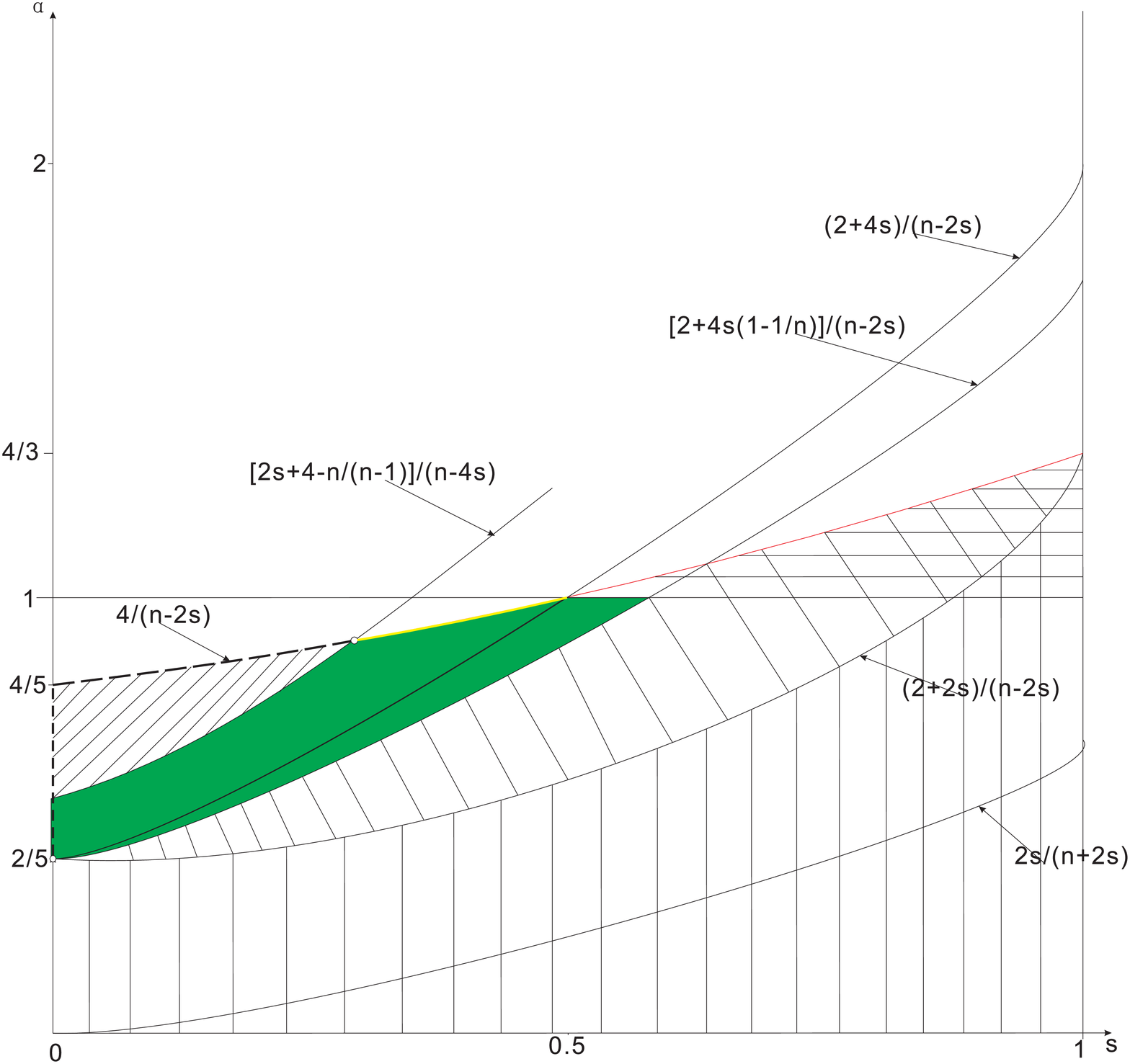}
\caption{Case $n=5$} Kato--Vertical area; Furioli and Terraneo--
Horizontal area; Rogers--Oblique line; Win and Tsutsumi--Red color;
Open parts-Left slashes and thick dashed lines; Beside to cover the known areas, the new part
of our results--Green color for the subcritical case and
yellow one for the critical cases.
 \centering
\includegraphics[width=0.8\textwidth,height=0.4\textheight]{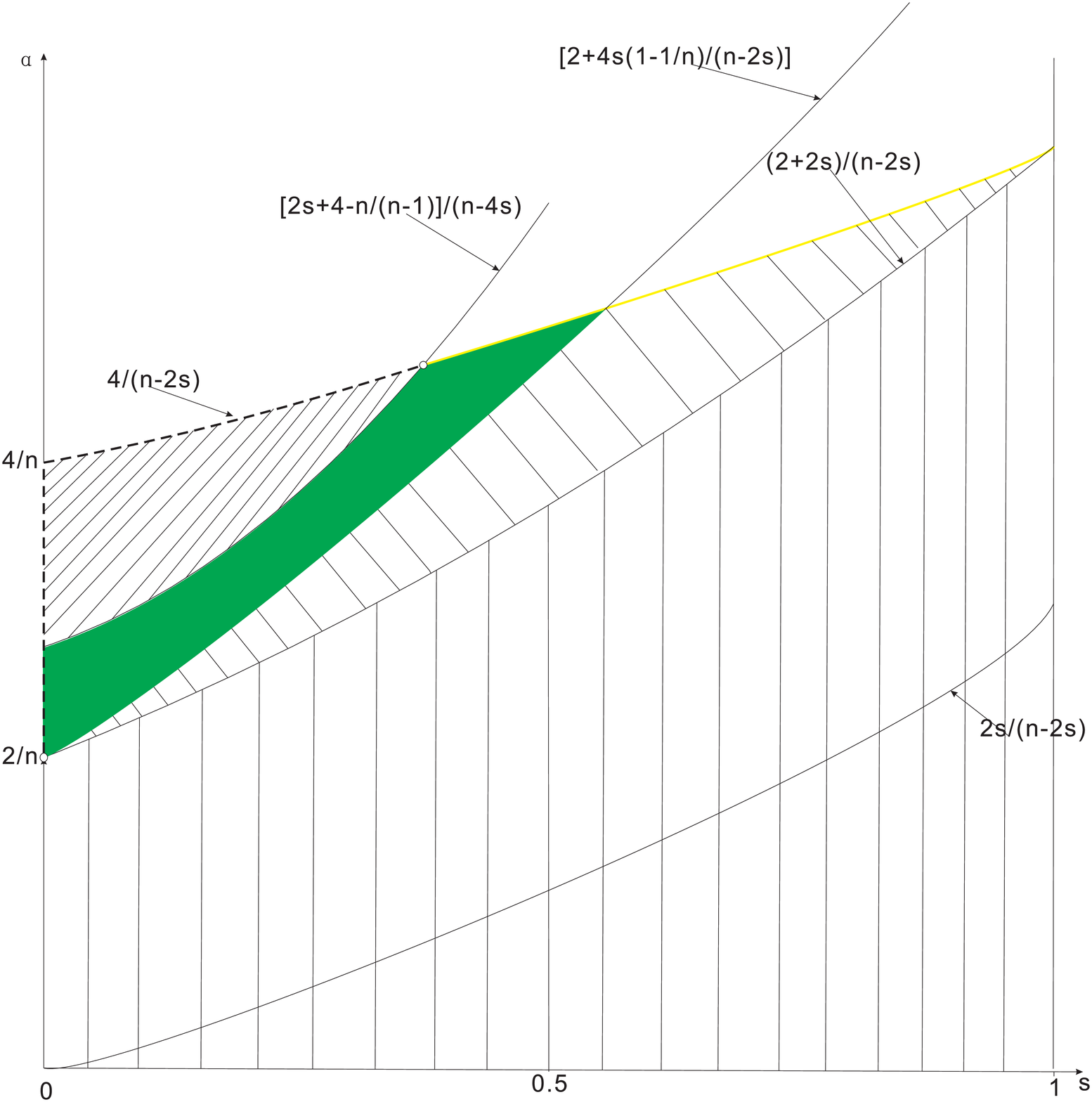}
\caption{Cases $n\geq6$}Kato--Vertical route area; Rogers--Oblique line;
Open parts-Left slashes and thick dashed lines; Beside to cover the known areas, the new part
of our results--Green color for the subcritical case and
yellow one for the critical cases.
\end{figure}

\end{document}